\definecolor{darkblue}{rgb}{0.13,0.13,0.39}
\definecolor{darkpurple}{RGB}{102,0,102}
\newtheorem{theorem}{Theorem}[section]
\newtheorem{lemma}[theorem]{Lemma}
\newtheorem{proposition}[theorem]{Proposition}
\theoremstyle{remark}
\newcommand{\rd}[1] {\mathrm{d}#1}
\newcommand{\W}[2]{W(\mathrm{d}#1,\mathrm{d}#2)}
\newcommand{\mW}[3]{W^{\otimes #1}(\mathrm{d}#2,\mathrm{d}#3)}
\newcommand{\bbm}{\begin{bmatrix}}
\newcommand{\ebm}{\end{bmatrix}} 
\newcommand{\mb}[1] {\mathbf{#1}}
\newcommand{\R}{\mathbf{R}}
\newcommand{\E}{\mathbf{E}}
\newcommand{\x}{\mathbf{x}}
\newcommand{\y}{\mathbf{y}}
\newcommand{\z}{\mathbf{z}}
\newcommand{\T}{\tau}
\newcommand{\V}{\Vert}
\newcommand{\p}{\prime}
\begin{document}

\title[The stochastic heat equation]{The stochastic heat equation, 2D Toda equations and dynamics for the multilayer process} 
\author{Chin Hang Lun}
\address{C. H. Lun,
  Mathematics Institute,
  University of Warwick,
  Coventry,
  CV4 7AL,
  UK}
\email{c.h.lun@warwick.ac.uk}

\author{Jon Warren}
\address{
  J. Warren,
  Department of Statistics,
  University of Warwick,
  Coventry,
  CV4 7AL,
  UK}
\email{j.warren@warwick.ac.uk} 
  
\maketitle
\thispagestyle{empty}

\begin{abstract}
We show that solutions of the stochastic heat equation driven by space-time white noise, although not smooth, meaningfully solve the two-dimensional Toda equations. Then by extending our arguments we show the time evolution of the multilayer process introduced by O'Connell and Warren, \cite{OW16}, is conjugate to a flow induced by the stochastic heat equation. In particular this establishes a Markov property conjectured by O'Connell and Warren. It also defines, for the first time, the multilayer process started from a general initial condition. 
\end{abstract}

\section{Introduction}\label{sec:intro}

In \cite{OW16}, O'Connell and Warren introduced the following: for each $n = 1,2,\ldots$, $t>0$ and $x$, $y\in\R$ define
\begin{equation}
  Z_n(t,x,y) = p_t(x-y)^n \bigg(1 + \sum_{k=1}^\infty \int_{\Delta_k(t)} \int_{\R^k} R_k(\mb{s},\mb{y^\prime}; t,x,y) \;\mW{k}{\mb{s}}{\mb{y}^\prime} \bigg),
  \label{eq:ZnChaos}
\end{equation}
where $\Delta_k(t) = \{0 < s_1 < s_2 < \cdots < s_k < t\}$. $\mb{s} = (s_1,\ldots,s_k)$, $\mb{y}^\prime = (y_1^\prime,\ldots,y_k^\prime)$ and $R_k(\mb{s}, \mb{y}^\prime ; t,x,y)$ is the $k$-point correlation function for a collection of $n$ non-intersecting Brownian bridges each of which starts at $x$ at time 0 and ends at $y$ at time $t$.
$p_t(x-y)= (2\pi t)^{-1/2} e^{-(x-y)^2/2t}$ is the transition density of Brownian motion.
The integral is a multiple stochastic integral with respect to space-time white noise. 
It was shown in \cite{OW16} by considering local times of non-intersecting Brownian bridges that the infinite sum in the definition is convergent in $L^2$ with respect to the white noise.

Observe that $u = Z_1$ is the solution to the (multiplicative) stochastic heat equation (SHE) with delta initial data:
\begin{equation}
 \begin{cases}
  \partial_t u(t,x,y) = \Big( \frac{1}{2} \Delta_y + \dot{W}(t,y) \Big) u(t,x,y), \quad t\in(0,\infty), y\in\mathbf{R}, \\
  u(0,x,y) = \delta(x-y), \quad x\in\mathbf{R}.
 \end{cases}
 \label{eq:SHEDeltaX}
\end{equation}
By a solution to the above we mean a random field $u$ which satisfies almost surely the mild form of the equation:
\begin{equation}
  u(t,x,y) = p_t(x-y) + \int_0^t \int_\R p_{t-s}(y-y^\prime) u(s,x,y^\prime) \;\W{s}{y^\prime}.
  \label{eq:SHEMild}
\end{equation}
Iterating equation (\ref{eq:SHEMild}) multiple times gives the chaos expansion (\ref{eq:ZnChaos}) for $n=1$.
One can express $Z_n(t,x,y)$ in a more suggestive notation:
\begin{equation}
  Z_n(t,x,y) = p_t(x-y)^n \mathbf{E}_{x,y;t}^X \bigg[ \mathscr{E}\mathrm{xp} \bigg( \sum_{i=1}^n \int_0^t W(s,X_s^i) \;\rd{s} \bigg) \bigg],
  \label{eq:ZnFeymannKac}
\end{equation}
where $(X_s^1,\ldots,X_s^n, 0\leq s\leq t)$ denotes the trajectories of the above mentioned collection of $n$ non-intersecting Brownian bridges and $\E_{x,y;t}^X$ is the corresponding expectation.
$\mathscr{E}\mathrm{xp}$ is the \emph{Wick exponential} defined by $\mathscr{E}\mathrm{xp}(M_t) := \exp\big( M_t - \frac{1}{2}\langle M,M\rangle_t \big)$ for a martingale $M$.
The Feynman--Kac formula (\ref{eq:ZnFeymannKac}) is not rigorous as it is unclear how one would define the integral of the white noise along a Brownian path and moreover to exponentiate such an expression.
However, one can obtain an rigorous expression by replacing $W$ in (\ref{eq:ZnFeymannKac}) with a smoothed version of the space-time white noise.
Indeed, Bertini and Cancrini showed in \cite{BC95} that such expression (in the case $n=1$) has a meaningful limit as one takes away the smoothing and that the limit solves the SHE.

Bymeans of  the Feynman--Kac formula, one can interpret the solution to the stochastic heat equation as the partition function (up to a multiplication by the heat kernel) of the continuum directed random polymer, and  then similarly, $Z_n$ is the partition function of a natural extension of the continuum polymer involving multiple Brownian paths.
The Cole--Hopf solution $h = \log u$ to the KPZ equation with narrow wedge initial data corresponds  via the Feynman--Kac formula to the free energy of the continuum directed random polymer.
With this interpretation $h$ can be regarded as the continuum analogue of the longest increasing subsequence of a random permutation, length of the first row of a random Young diagram, directed last passage percolation and free energy of a discrete/semi-discrete polymer in random media etc., see \cite{BDJ99}, \cite{BDJ99b}, \cite{BOO00}, \cite{Joh99}, \cite{Joh01}, \cite{PS02}, \cite{Joh03}, \cite{COSZ14} and the references therein.
In each of these discrete models, there is further structure provided either by multiple non-intersecting up-right paths on lattices, multi-layer growth dynamics or Young diagrams constructed from the RSK correspondence.
The work in the above mentioned references have shown that in some cases, utilisation of this additional structure have lead to derivations of exact formulae for the distribution of quantities of interest.
The above mentioned discrete models provide examples of what is called integrability or exact solvability. The motivation for introducing the partition functions $Z_n$, which are the continuum analogue of the structures mentioned above, is that they should provide insight to the integrable structure in the continuum setting. 

There has been other recent work on multiple polymer paths  and the multilayer process in the stochastic heat equation setting. In \cite{DL15} and \cite{DL16}, in a manifestation of the exact solvability, the Bethe Ansatz is used to make exact and asymptotic distributional statements. In \cite{CH13}, the KPZ line ensemble, which is expected to be given by the logarithm of the multilayer process we are considering here is shown to have a remarkable Gibbs property which generalises that of the Airy line ensemble, \cite{CH14}. Very recent results in \cite{CN16} show the multilayer process arises as a scaling limit of discrete polymer models.

It was shown in \cite[Proposition 3.3 and 3.7]{OW16} by considering a smooth space-time potential that $(Z_n, n\geq 1)$ should satisfy a system of coupled SPDEs, however unfortunately it is not immediately obvious that such SPDEs make sense in the white noise setting.
Nevertheless, it does suggests that the process should have a Markovian evolution.
Indeed, we have the following theorem which is the main result of this paper.
\begin{theorem}
  For each $n\geq 1$ and $x\in\mathbf{R}$, 
  \begin{equation}
    \big(Z_1(t,x,\cdot),\ldots,Z_n(t,x,\cdot) ; t\geq 0\big),
    \label{eq:multiLayer}
  \end{equation}
  is a Markov process with respect to the filtration generated by the space-time white noise taking values in the space ${\mathfrak L}_n:= C(\mathbf{R})\times\cdots\times C(\mathbf{R})$, where $C(\R) := C(\R,\R_+)$ is the space of continuous functions from $\R$ to $\R_+ = (0,\infty)$.
  \label{thm:Markov}
\end{theorem}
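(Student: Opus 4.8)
The plan is to realise the vector $(Z_1(t,x,\cdot),\dots,Z_n(t,x,\cdot))$ as the image, under a fixed nonlinear change of coordinates, of a process that evolves by a \emph{linear} stochastic flow built directly from the stochastic heat equation, and then to read off the Markov property from the independence of the driving noise over disjoint time intervals. First I would set up this flow: for $0\le s\le t$ let $u_{s,t}(z,y)$ denote the mild solution of \eqref{eq:SHEDeltaX} started from $\delta(z-\cdot)$ at time $s$, so that $Z_1(t,x,\cdot)=u_{0,t}(x,\cdot)$, and define the random integral operators $U_{s,t}f(y)=\int_\R u_{s,t}(z,y)f(z)\,\rd z$ on $C(\R)$. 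The mild formulation \eqref{eq:SHEMild} gives the cocycle identity $U_{s,t}=U_{r,t}\circ U_{s,r}$ for $s\le r\le t$ almost surely (equivalently the Chapman--Kolmogorov relation $u_{s,t}(z,y)=\int_\R u_{r,t}(w,y)u_{s,r}(z,w)\,\rd w$), while $U_{s,t}$ is measurable with respect to the white noise restricted to $(s,t]\times\R$, and these restrictions are independent over disjoint time intervals. Strict positivity and spatial continuity of solutions of the SHE guarantee that $U_{s,t}$ preserves $C(\R,\R_+)$, so the whole construction lives in the correct state space.

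Next I would establish a determinantal (Karlin--McGregor) representation. The key observation is that along a configuration of \emph{non-intersecting} paths the cross-variations $\langle \int W(\cdot,X^i),\int W(\cdot,X^j)\rangle$ vanish for $i\ne j$, since these are governed by the intersection local times already used in \cite{OW16}; hence the Wick exponential in \eqref{eq:ZnFeymannKac} factorises over the $n$ paths conditionally on the noise. Combined with the reflection argument of Karlin--McGregor — which uses only continuity of paths and the Chapman--Kolmogorov property recorded above — this identifies the partition function of $n$ non-intersecting polymers running between \emph{distinct} endpoints $\mathbf z=(z_1<\dots<z_n)$ and $\mathbf w=(w_1<\dots<w_n)$ with $\det[u_{s,t}(z_i,w_j)]_{i,j=1}^n$. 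Splitting the $n$ bridges that define $Z_n(t,x,y)$ at an intermediate time $s\in(0,t)$, where they occupy distinct ordered positions, then expresses $(Z_1,\dots,Z_n)(t,x,\cdot)$ through the operators $U_{s,t}$ acting on data that is measurable with respect to the noise on $[0,s]$. This is the advertised conjugacy: writing $\Phi$ for the map sending $(Z_1,\dots,Z_n)(s,x,\cdot)$ to the corresponding collection of single-layer profiles, the evolution takes the form $\Theta_{s,t}=\Phi^{-1}\circ U^{(n)}_{s,t}\circ\Phi$, where $U^{(n)}_{s,t}$ is the (linear) action of the SHE flow and $\Theta_{s,t}$ is measurable in the noise on $(s,t]$ and inherits the cocycle property.

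The main obstacle, and the place where the earlier results interpreting the 2D Toda equations enter, is the confluent limit in which the endpoints coalesce: recovering $Z_n$ with \emph{all} bridges starting at $x$ and ending at $y$ requires sending $\mathbf z,\mathbf w$ to the diagonal, which formally produces a Wronskian $\det[\partial_y^{i-1}\partial_x^{j-1}u]$ of the SHE Green's function. Since $u$ is only H\"older continuous in space, this object must be understood through the regularised Toda machinery developed in the first part of the paper rather than by naive differentiation, and one must check that the resulting map $\Phi$ and its inverse are well defined on all of $\mathfrak{L}_n$, are jointly measurable, and return strictly positive continuous functions. Once this is in place — so that $\Theta_{s,t}$ is a genuine stochastic flow on $\mathfrak{L}_n$, measurable in the noise on $(s,t]$ and satisfying $\Theta_{s,t}=\Theta_{r,t}\circ\Theta_{s,r}$ — the Markov property follows immediately: conditionally on $(Z_1,\dots,Z_n)(s,x,\cdot)$ the future is the image under $\Theta_{s,t}$ of that state and of the noise on $(s,t]$, which is independent of the noise on $[0,s]$ generating the past. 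Since $\Theta_{s,t}$ is defined on the whole of $\mathfrak{L}_n$, the same construction simultaneously yields the multilayer process started from a general initial condition.
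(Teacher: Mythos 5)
Your proposal follows the same architecture as the paper's proof --- pass to a multi-point object that evolves under the linear, Markovian flow induced by the stochastic heat equation, conjugate that flow back to the multilayer data, and conclude from independence of the noise increments; your $\Phi$, $U^{(n)}_{s,t}$ and $\Theta_{s,t}$ are the paper's $\mathcal{R}^{-1}$, $\mathcal{M}_{s,t}$ and $\mathcal{R}\,\mathcal{M}_{s,t}\,\mathcal{R}^{-1}$, and your closing conditioning argument is exactly the paper's. But there is a genuine gap at the centre of the plan: you assume, rather than prove, that $\Phi$ is a well-defined bijection on all of ${\mathfrak L}_n$ --- you write that ``one must check'' this. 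That invertibility is the whole content of the theorem. The reason the naive bridge-splitting argument fails for $n\geq 2$ is precisely that at an intermediate time $s$ the polymers occupy distinct positions, so the conditional law of the future depends a priori on the multi-point data $M_k(s,x\1_k,\y)$, $\y\in\R^k$; the Markov property holds if and only if that data is a deterministic functional of the diagonal values $\big(Z_1(s,x,\cdot),\ldots,Z_n(s,x,\cdot)\big)$. The ``regularised Toda machinery'' you invoke does not supply this: knowing that the diagonal functions $\tau_k$ satisfy integrated Toda equations does not reconstruct $M_k$ off the diagonal. What supplies it in the paper is the explicit inversion of Proposition \ref{representation1} (injectivity of $\mathcal{R}$, via the iterated-integral formula $\tilde{f}_k=\ell_1\int\frac{\ell_2}{\ell_1^2}\int\frac{\ell_1\ell_3}{\ell_2^2}\cdots\int\frac{\ell_{k-2}\ell_k}{\ell_{k-1}^2}$, resting on the generalised-Wronskian Jacobi identity, Lemma \ref{keylemma}, and on Lemma \ref{minors} to guarantee the needed Wronskians exist), together with Proposition \ref{representation2} (surjectivity onto ${\mathfrak L}_n$, proved by an entirely different tool, the GUE-minor interpretation of the interlacing kernel). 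Note also that the individual profiles $f_i$ are not unique --- they are determined only up to constants of integration --- so the conjugacy must be formulated on the determinant ratios ${\mathfrak f}_k\in{\mathfrak F}_n$, as the paper does, rather than on $n$-tuples $(f_1,\ldots,f_n)$.

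A secondary problem is your route to the determinantal representation. Factorising the Wick exponential over non-intersecting paths and appealing to Karlin--McGregor reflection is not an admissible argument here: the paper states explicitly that the Feynman--Kac expression \eqref{eq:ZnFeymannKac} is not rigorous in the white-noise setting, and it proves the identity \eqref{eq:MnKM} instead by It\^o calculus on the mild equations (see the Appendix, or \cite{OW16}). Likewise, your splitting of the bridges at an intermediate time is available rigorously as the flow identity \eqref{mflow} (Corollary 6.2 of \cite{OW16}), combined with the moment bounds from \cite{LW15} that allow the evolution equation \eqref{eq:MnSPDEg} to be restarted from the random data $M_k(s,x\1_k,\cdot)$. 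These ingredients exist and can be cited, so this part of your plan is repairable; the missing inversion described above is what actually has to be proved.
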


In the case of the stochastic heat equation ($n=1$), the Markov property can be seen from the Feynman--Kac formula since $u(s+t,x,y)$ can be written in the form 
\[
  \E_{x,y;t}^X\Big[ \mathscr{E}\mathrm{xp}\big( F_X(0,s)\big) \mathscr{E}\mathrm{xp}\big( F_X(s,t)\big)\Big],
\]
where $F_X(s,t)$ is a function of the Brownian bridge $X$ starting from $(s,X_s)$ and ending at $(t,y)$ and the white noise over the time interval $[s,t]$, which is independent of the white noise over $[0,s]$ and the bridge from $(0,x)$ to $(s,X_s)$.
From this one obtains the flow property of $u$:
\begin{equation}
  u(t,x,y) = \int_\R u(s,x,z) u(s,t,z,y) \;\rd{z},
  \label{eq:flowSHE}
\end{equation}
where $u(s,s+\cdot,z,\cdot)$ is the solution to the SHE driven by the shifted white noise $\dot{W}(s+\cdot,\cdot)$.
However, this argument does not apply for $n\geq 2$ since the definition of $Z_n$ involves non-intersecting Brownian bridges with common starting and ending points but at any intermediate time each of the bridges are at distinct locations.
Nevertheless, Theorem \ref{thm:Markov} is true and we shall prove it by considering a natural extension $M_n$ of $Z_n$ which  corresponds to allowing the multiple polymer paths to have differing starting and ending points from one another. This extended process can easily be seen to have the Markov property, and the key to understanding Theorem \ref{thm:Markov} is that the extension $M_n$ turns out to be able to be recovered from the values of $(Z_1, Z_2, \ldots, Z_n)$. In fact we establish in  Theorem \ref{mainthm2} what amounts to a conjugacy between the random dynamical systems  that  describe the evolution of $(M_1, M_2, \ldots, M_n)$ and $(Z_1, Z_2, \ldots, Z_n)$ as shown in  Figure \ref{figure}. This develops an idea that was suggested in \cite{OW16}, but only rigorously established when $n=2$.

\begin{figure}
\label{figure}
\includegraphics[height=4cm, trim= 12cm 21cm 11cm 4cm ]{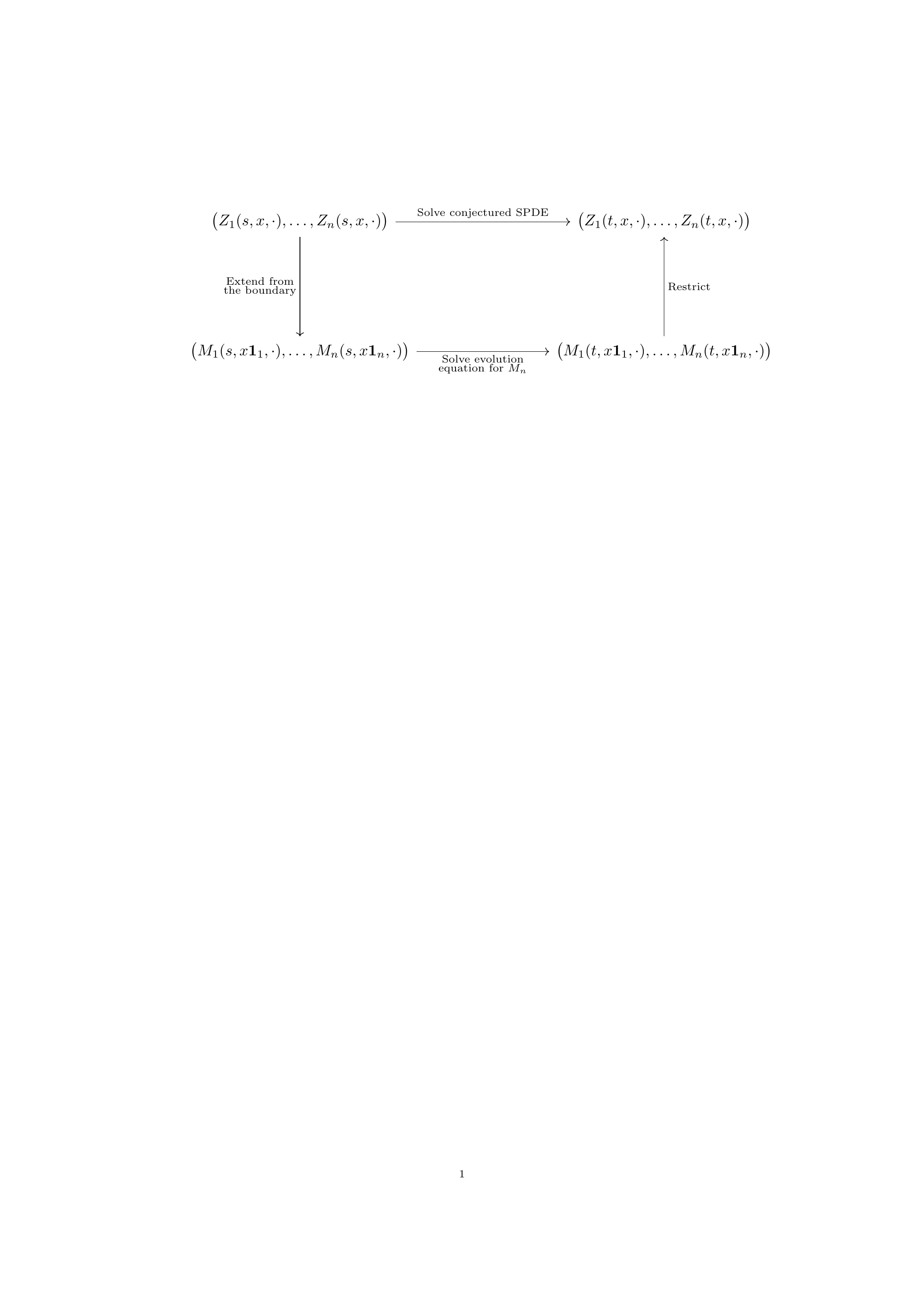}
\caption{The evolution of the multilayer process is described by a conjugacy}
\end{figure}

Let  $W_n = \{x\in\R^n : x_1\geq\cdots\geq x_n\}$ be the Weyl chamber in $\R^n$, then define for $n\geq 1$, $(t,\x,\y)\in (0,\infty)\times W_n\times W_n$,
\begin{equation}
  M_n(t,\mb{x},\mb{y}) = \frac{p_n^*(t,\mb{x},\mb{y})}{\Delta_n(\x)\Delta_n(\y)} \bigg(1 + \sum_{k=1}^\infty \int_{\Delta_{k}(t)} \int_{\R^k} R_k(\mb{s},\mb{y}^\prime; t,\mb{x},\mb{y}) \;\mW{k}{\mb{s}}{\mb{y}^\prime} \bigg),
  \label{eq:MnChaos}
\end{equation}
where $R_k$ is the $k$-point correlation function of a collection of $n$ non-intersection Brownian bridges which starts at $\mb{x}$ at time 0 and ends at $\mb{y}$ at time $t$.
$p_n^*(t,\mb{x},\mb{y}) = \det[p_t(x_i-x_j)]_{i,j=1}^n$ is by the Karlin--McGregor formula \cite{KM59} the transition density of Brownian motion killed at the boundary of $W_n$ and $\Delta_n(\mb{x}) = \prod_{1\leq i<j\leq n} (x_i-x_j)$ is the Vandermonde determinant.
Notice that $M_n(t,\x,\y)$ is well defined for $\x$, $\y$ at the boundary of the Weyl chamber since $p_n^*(t,\x,\y)/\Delta_n(\x)\Delta_n(\y)$  is a smooth function of $(\x,\y)$ over $\R^n\times\R^n$ by \cite[Lemma 5.11]{BBO09} and the $k$-point correlation function $R_k$ extends continuously to the boundary.
Moreover we can extend $M_n$ by symmetry to a function on $\R^n\times\R^n$, $M_n(t,\x,\y)$ satisfying $M_n(t, \pi \x,\sigma \y) = M_n(t,\x,\y)$ for any permutations $\pi$, $\sigma$ of $\{1,2,\ldots,n\}$. 

We have the following which is the main result of \cite{LW15}.
\begin{theorem}
\label{thm:MnRegularity}
  For all $n\geq 1$, $M_n$ has a version that is continuous over $(0,\infty)\times {\mathbf R}^n \times {\mathbf R}^n$ and $\mathbb{P}[M_n(t,\x,\y) > 0 \text{ for all } t>0 \text{ and } \x,\y\in {\mathbf R}^n] = 1$.
\end{theorem}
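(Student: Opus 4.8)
The plan is to treat continuity and positivity separately. For continuity I would run a Kolmogorov-type argument fed by moment estimates obtained from a replica representation of the chaos expansion, and for positivity I would first get nonnegativity by approximation and then upgrade to strict positivity.

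The starting point is to read off the moments of $M_n$ from (\ref{eq:MnChaos}). By orthogonality of the multiple Wiener--It\^o integrals,
\[
\E[M_n(t,\x,\y)^2] = \Big(\frac{p_n^*(t,\x,\y)}{\Delta_n(\x)\Delta_n(\y)}\Big)^2 \Big(1 + \sum_{k\ge 1}\int_{\Delta_k(t)}\int_{\R^k} R_k(\mb{s},\mb{y}';t,\x,\y)^2\, \db{s}\,\mathrm{d}\mathbf{y}'\Big),
\]
and I would identify the bracketed series with $\E^{X,\tilde X}[\exp(\sum_{i,j=1}^n L_t(X^i-\tilde X^j))]$, where $X=(X^1,\dots,X^n)$ and $\tilde X=(\tilde X^1,\dots,\tilde X^n)$ are two independent families of non-intersecting Brownian bridges from $\x$ to $\y$ on $[0,t]$ and $L_t(\cdot)$ is the collision local time at $0$ arising from the spatial delta-correlation of the white noise; this is the $n$-layer analogue of the identity that underlies the $L^2$ convergence in \cite{OW16}. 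More generally the $p$-th moment is governed by $p$ such independent families, weighted by $\exp$ of the sum of the $\binom{p}{2}n^2$ cross-family collision local times. The key analytic input is that, for a single pair, $Y=X^i-\tilde X^j$ is a Gaussian bridge and its local time at $0$ has exponential moments of every order, $\E[\exp(\lambda L_t(Y))]<\infty$ for all $\lambda$, since on a bounded time horizon such a local time has Gaussian-type tails. A generalised H\"older inequality then combines the finitely many cross terms, showing that every moment of $M_n(t,\x,\y)$ is finite and bounded uniformly on compact subsets of $(0,\infty)\times\R^n\times\R^n$; the extension to the boundary of the Weyl chamber uses the smoothness of $p_n^*/\Delta_n\Delta_n$ from \cite[Lemma 5.11]{BBO09} together with the continuous extension of the $R_k$.

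To get continuity I would apply the same exponential integrability to \emph{coupled} families with perturbed endpoints and time horizons, so as to control the $L^2$ norm of an increment $M_n(t,\x,\y)-M_n(t',\x',\y')$; H\"older continuity in each argument then follows from the continuity of the bridge laws and of the prefactor in $(t,\x,\y)$. Passing from these $L^2$ increment bounds to arbitrarily high $L^p$ norms via the equivalence of moments on a fixed Wiener chaos (Nelson hypercontractivity), and taking care that the chaos-order-dependent constants stay summable across the expansion, yields $\E[|M_n(t,\x,\y)-M_n(t',\x',\y')|^p]\le C|(t,\x,\y)-(t',\x',\y')|^{\beta p}$. Feeding this into the Kolmogorov criterion with $p$ large enough that $\beta p$ exceeds the dimension $2n+1$ produces the desired continuous version on $(0,\infty)\times\R^n\times\R^n$.

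For positivity I would first obtain $M_n\ge 0$ almost surely: replacing $W$ by a mollification $W_\varepsilon$ gives $M_n^\varepsilon=(\text{prefactor})\,\E^X[\mathscr{E}\mathrm{xp}_\varepsilon(\cdots)]\ge 0$ as a genuine polymer partition function, and since $M_n^\varepsilon\to M_n$ in $L^2$ a subsequence converges almost surely, preserving nonnegativity. The genuinely hard step, and the main obstacle, is to upgrade $\ge 0$ to $>0$ simultaneously for all $(t,\x,\y)$: pointwise almost-sure positivity at a fixed point, even combined with the continuity just established, does not cover the uncountable parameter set. Here I would adapt Mueller's strong-positivity method for the stochastic heat equation, exploiting a flow/semigroup decomposition of $M_n$ (the analogue of (\ref{eq:flowSHE})) to propagate a strictly positive lower bound forward in time from short times, where $M_n(t,\x,\y)\approx p_n^*(t,\x,\y)/\Delta_n(\x)\Delta_n(\y)>0$ because the chaos correction is small. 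Making this propagation uniform in the spatial variables, and thereby ruling out that the continuous nonnegative field ever touches zero, is the crux of the argument.
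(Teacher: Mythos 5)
First, a structural point: the paper does not prove Theorem \ref{thm:MnRegularity} at all --- it imports it as the main result of \cite{LW15} --- so your proposal has to be measured against that companion paper rather than against anything in the present text. Your continuity half is, in outline, the standard route and essentially the one taken there: replica/second-moment identities expressing the $L^2$ norms of the chaos coefficients in \eqref{eq:MnChaos} through exponential moments of pairwise collision local times (which do have Gaussian tails on a finite horizon), hypercontractivity chaos-by-chaos (the coefficients decay fast enough to absorb the $(p-1)^{k/2}$ factors), and Kolmogorov's criterion. The soft spot is that you derive increment bounds from ``continuity of the bridge laws'': Kolmogorov needs quantitative H\"older estimates on differences of the correlation functions $R_k$, with constants summable in $k$ and uniform up to the boundary of the Weyl chamber, where both $\Delta_n$ and $p_n^*$ degenerate. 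That is where the real technical work sits, but it is a matter of labour rather than of ideas.

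The genuine gap is strict positivity, and you have named it yourself: the step you call ``the crux'' --- upgrading non-negativity to positivity simultaneously over the uncountable parameter set --- is deferred to ``adapting Mueller's method'', and that adaptation is exactly what is missing. Mueller's theorem is a scalar statement about the SHE with non-negative function-valued initial data; even the $n=1$ case of the present theorem (delta initial data) lies outside its hypotheses and needed a separate argument. For $n\geq 2$ the event $\{M_n>0 \text{ everywhere}\}$ is equivalent to almost-sure \emph{strict total positivity} of the random kernel $u(t,\cdot,\cdot)$, i.e.\ strict positivity of every minor $\det[u(t,x_i,y_j)]$ at ordered distinct arguments; this is not a scalar positivity statement, and no comparison principle or support theorem is available for the Dyson-type evolution \eqref{eq:MnSPDE} that $M_n$ satisfies, so there is no equation to which Mueller's scheme can be applied layer by layer. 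Concretely, non-negativity together with the flow identity \eqref{mflow} gives
\[
M_n(t,\x,\y)=\frac{1}{n!}\int_{\mathbf{R}^n} M_n(s,\x,\z)\,M_n(s,t,\z,\y)\,\Delta_n(\z)^2 \;\rd{\z}\;\geq\; 0,
\]
but to conclude strict positivity one must rule out that the two non-negative continuous factors have essentially disjoint supports, and your proposal supplies no mechanism for doing so. (A smaller gap: your non-negativity step assumes that the smoothed partition functions $M_n^\varepsilon$ converge to the chaos series; this Bertini--Cancrini type statement is known for $n=1$ from \cite{BC95} but has to be proved for general $n$.) In short: the continuity half is a correct sketch of the known approach; the positivity half states the hard problem rather than solving it.
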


Moreover, when all the coordinates of $\x$ and $\y$ are equal, $M_n$ agrees almost surely up to a strictly positive multiplicative constant with $Z_n$, that is
\begin{equation}
  M_n(t,a\mb{1}_n,b\mb{1}_n) =  c_{n,t} Z_n(t,a,b), \quad c_{n,t} = c_n t^{-n(n-1)/2}, c_n = \left(\prod_{i=1}^{n-1} i!\right)^{-1},
  \label{eq:MnBoundary}
\end{equation}
where $\mb{1}_n= (1,\ldots,1) \in{\mathbf R}^n$.
This implies the almost sure continuity and everywhere strict positivity of $Z_n$ for all $n$.

It was shown in \cite{Ha13} that the difference of two solutions to the KPZ equation (with the same white noise) starting from two different H\"older continuous initial data is in $C^{\frac{3}{2}-\varepsilon}$ for every $\varepsilon>0$. 
Since $M_2(t,\x,\cdot)$ and $u(t,x,\cdot)$ are H\"older continuous of order up to 1/2, Theorem \ref{thm:MnRegularity} and equation \eqref{ratio} imply that the ratio of $u(t,x_1,\cdot)$ and $u(t,x_2,\cdot)$ is in $C^{\frac{3}{2}-\varepsilon}$ for every $\varepsilon>0$ and hence generalises the result of \cite{Ha13} to delta initial data. In fact Theorem \ref{thm:MnRegularity} implies that  there is much more regularity present when considering multiple  solutions to the stochastic heat equation driven by the same white noise than might be initially expected.  

We have the following determinantal expression for $M_n$ (see \cite[Proposition 3.2]{OW16}, and also proved in the Appendix to this paper)
\begin{equation}
  M_n(t,\x,\y) = \frac{\det[u(t,x_i,y_j)]_{i,j=1}^n} {\Delta_n{(\x)}\Delta_n{(\y)}}, \quad t>0, \x,\y\in {\mathbf R}^n_{\neq},
  \label{eq:MnKM}
\end{equation}
where the entries in the determinant are solutions to (\ref{eq:SHEDeltaX}) each driven by the same white noise.
${\mathbf R}^n_{\neq}$ denotes the subset of points in ${\mathbf R}^n$ whose coordinates are all distinct.
In view of this representation Theorem \ref{thm:MnRegularity} implies we can meaningfully assign values to Wronskians whose entries are formally given as derivatives of solutions of the stochastic heat equation. 

Suppose that we replace the space-time white noise with a smooth space-time potential in the definition of $Z_n$ then it can be shown that $Z_n$ is given by the bi-directional Wronskian
\begin{equation*}
  Z_n(t,x,y) = c_n t^{n(n-1)/2} \det[\partial_x^{i-1} \partial_y^{j-1} u(t,x,y) ]_{i,j=1}^n,
\end{equation*}
where $u(t,x,y)$ is the solution to (\ref{eq:SHEDeltaX}) driven by the smooth potential.
Now let $\tau_n(x,y) = \det[\partial_x^{i-1} \partial_y^{j-1} u(t,x,y) ]_{i,j=1}^n$ then $\tau_n$ satisfies the two-dimensional Toda equation (2DTE) 
\begin{equation*}
  \partial_{xy} q_n = e^{q_{n+1}-q_n} - e^{q_n-q_{n-1}}, \quad n\geq 1,
\end{equation*}
where $q_n = \log(\T_n/\T_{n-1})$ or equivalently,
\begin{equation}
  \partial_{xy} \log \T_n = \frac{\T_{n-1}\T_{n+1}}{\T_n^2},
  \label{eq:2DTE}
\end{equation}
with the convention that $\T_0 \equiv 1$.
Evaluating the derivative and rearranging we obtain
\begin{equation}
  \T_n \partial_{xy} \T_n - (\partial_x\T_n) (\partial_y\T_n) = \T_{n-1}\T_{n+1}.
  \label{eq:TodaBilinear}
\end{equation}
We introduce the following notation.
For an $(n+1)\times(n+1)$ determinant $D$, let $D\begin{bmatrix} i \\ j\end{bmatrix}$ be the $n\times n$ determinant obtained from $D$ by removing the $i$th row and the $j$th column and similarly let $D\begin{bmatrix} i & j \\ k & l\end{bmatrix}$ be the $(n-1)\times(n-1)$ determinant obtained from $D$ by removing the $i$th and $j$th rows and the $k$th and $l$th columns.
%Then, we have
%\begin{align*}
%  \T_n &= \T_{n+1} \begin{bmatrix} n+1 \\ n+1\end{bmatrix}, \\
%  \T_{n-1} &= \T_{n+1} \begin{bmatrix} n & n+1 \\ n & n+1\end{bmatrix}, \\
%  \partial_x\T_n &= \T_{n+1} \begin{bmatrix} n \\ n+1\end{bmatrix}, \\
%  \partial_y\T_n &= \T_{n+1} \begin{bmatrix} n+1 \\ n\end{bmatrix}, \\
%  \partial_{xy}\T_n &= \T_{n+1} \begin{bmatrix} n \\ n\end{bmatrix},
%\end{align*}
%where the last three expressions follows from the multi-linearity of determinants.
Then, by properties of Wronskians (\ref{eq:TodaBilinear}) can be written as
\[
  \T_{n+1}\begin{bmatrix} n+1 \\ n+1\end{bmatrix} \T_{n+1}\begin{bmatrix} n \\ n\end{bmatrix} - \T_{n+1} \begin{bmatrix} n \\ n+1\end{bmatrix} \T_{n+1} \begin{bmatrix} n+1 \\ n\end{bmatrix} = \T_{n+1} \begin{bmatrix} n & n+1 \\ n & n+1\end{bmatrix} \T_{n+1},
\]
which is nothing but the Jacobi identity for determinants \cite[equation 2.73]{Hir04}.

In Section \ref{sec:wronskians} we study Wronskians defined as the continuous extensions of determinantal expressions such as \eqref{eq:MnKM}  where $u$ is not necessarily differentiable. With the help of the Jacobi idenitity we show that such generalised Wronksians satisfy the 2D Toda equations in an integrated form. Then in Section \ref{sec:conjugacy}, using a key lemma from the preceeding section, we construct the conjugacy that describes the evolution of the multilayer process. A fuller description of this conjugacy is the content of Section \ref{sec:dynamics} which follows now.

\subsection*{Acknowledgements}

The research of C.H.L. was supported by EPSRC grant number EP/H023364/1 through the MASDOC DTC.
This research of J.W. was  supported in part by the National Science Foundation under Grant No. NSF PHY11-25915.

\section{Description of the dynamics of the multilayer process}
\label{sec:dynamics}

The multilayer process is, for a fixed $x\in {\mathbf R}$, and $n\geq 1$ the finite sequence of ``lines'' 
 $\big(Z_1(t,x,\cdot),\ldots,Z_n(t,x,\cdot) )$,  which we consider to be evolving in the time variable  $t>0$. Actually it seems more natural to work with a slightly different normalisation than that  originally used by \cite{OW16}, and consider instead the process
${\boldsymbol\tau}_{t}(x,\cdot)=\big(\tau_1(t,x,\cdot),\ldots, \tau_n(t,x,\cdot) )$,
where
\begin{equation}
\tau_n(t,x,y)=c_{n}^{-1} t ^{-n(n-1)/2} Z_n(t,x,y)= c_n^{-2} M_n( t,x{\mathbf 1}_n, y{\mathbf 1}_n).
\end{equation}

As discussed in the Introduction, we describe the evolution of the multilayer process with the aid of the extended process $M_n$. It was shown in \cite{LW15} that $M_n$, defined by the chaos expansion \eqref{eq:MnChaos}, satisfies an evolution equation which can be regarded as a type of multi-dimensional SHE.
\begin{equation}
  M_n(t,\x,\y) = \frac{p_n^*(t,\x,\y)}{\Delta_n(\x)\Delta_n(\y)} + \frac{1}{(n-1)!} \int_0^t \int_{\R^n} Q_{t-s}(\y,\y^\p) M_n(s,\x,\y^\p) \;\rd{\y_*^\p} \;\W{s}{y_1^\p},
  \label{eq:MnSPDE}
\end{equation}
almost surely for all $(t,x,y)\in(0,\infty)\times\R^n\times\R^n$ where $\rd{\y_*^\p} = \rd{y_2^\p}\ldots\rd{y_n^\p}$.
The function $Q_t(\x,\y) = \frac{\Delta_n(\y)}{\Delta_n(\x)} p_n^*(t,\x,\y)$ is the transition density of Dyson Brownian motion \cite{Dy62} starting from $\x\in W_n$ and ending at $\y\in W_n$.
Note that in (\ref{eq:MnSPDE}) we have extended $Q_t$ by symmetry to a function on $\R^n\times\R^n$ so the integral over $\R^n$ is defined.
We can consider this same  evolution  for initial data $g$, which is assumed to be a symmetric function on ${\mathbf R}^n$,
\begin{multline}
  M^g_n(t,\y) =\frac{1}{n!} \int_{{\mathbf R}^n} g(\y^\p)Q_{t}(\y,\y^\p) \;\rd{\y^\p}  + \\
\frac{1}{(n-1)!} \int_0^t \int_{\R^n} Q_{t-s}(\y,\y^\p) M^g_n(s,\y^\p) \;\rd{\y_*^\p} \;\W{s}{y_1^\p},
  \label{eq:MnSPDEg}
\end{multline}
For bounded $g$, existence and uniqueness for this equation were established in  \cite{LW15}.
If $g$ takes the form 
\begin{equation}
g(\x)= \frac{\det[f_i(x_j)]_{i,j=1}^n} {\Delta_n{(\x)}}, \quad \x\in {\mathbf  R}^n_{\neq},
\end{equation}
then we have the representation
\begin{equation}
\label{eq:REP}
  M_n^g(t,\y)=\frac{\det[u^{f_i}(t,y_j)]_{i,j=1}^n} {\Delta_n{(\y)}}, \quad t>0, \y\in  {\mathbf  R}^n_{\neq},
\end{equation}
where $u^{f_i}$ denotes the solution to the solution to the SHE with initial condition at time $0$ given by $f_i$. This is a consequence of equation \eqref{eq:MnKM}.

From now on we use the notation 
\[
  f_1 \wedge f_2 \wedge \ldots \wedge f_n (\x)= \det[f_i(x_j)]_{i,j=1}^n
\]
We let ${\mathfrak F}_n$  be the set of sequences $({\mathfrak f}_1, {\mathfrak f}_2, \ldots, {\mathfrak f}_n)$  with  each ${\mathfrak f}_k:{\mathbf R}^k \rightarrow  {\mathbf R}$ being a continuous, strictly positive symmetric function, and such that there exists a sequence of continuous functions $f_1, f_2, \ldots, f_k, \ldots$ each defined on ${\mathbf R}$ so that, for every $k$,
\[
{\mathfrak f_k}(\x)= \frac{f_1\wedge f_2 \wedge \ldots \wedge f_k(\x)} {\Delta_k(\x)}
\]
 for $\x\in {\mathbf R}^k$ with all coordinates distinct. In view of equation \eqref{eq:REP},  for $\boldsymbol{\mathfrak f} \in {\mathfrak F}_n$ with bounded components, we can define an ${\mathfrak F}_n$-valued process $(\boldsymbol{\mathfrak f}_t)_{t \geq 0}$ with initial value $\boldsymbol{\mathfrak f}$ by 
 \begin{equation}
\boldsymbol{\mathfrak f}_t= \bigl(   M_1^{{\mathfrak f}_1}(t,\cdot), M_2^{{\mathfrak f}_2}(t,\cdot), \ldots, M_n^{{\mathfrak f}_n}(t,\cdot) \bigr)
\end{equation}
We will denote, for any  $t>0$,  the ${\mathfrak F}_n$-valued random variable  $\boldsymbol{\mathfrak f}_t$ by ${\mathcal M}_t \boldsymbol{\mathfrak f}$.  For $0<s<t$, replacing the white noise in \eqref{eq:MnSPDEg} by its shift $\dot{W}^{(s)}(\cdot,\cdot)= \dot{W}(s+\cdot,\cdot)$ we can define analogously ${\mathcal M}_{s,t} \boldsymbol{\mathfrak f}$ where now $\boldsymbol{\mathfrak f}$ might be a random element of $\boldsymbol{\mathfrak F}_n$ determined by the white noise prior to time $s$, and whose $p$th-moments are uniformly bounded. Then, for bounded ${\mathfrak f}$, the existence and uniqueness results proved in \cite{LW15} imply that
\begin{equation}
\label{eq:Mevolv}
{\mathcal M}_t \boldsymbol{\mathfrak f}  = {\mathcal M}_{s,t}  {\mathcal M}_s \boldsymbol{\mathfrak f}.
\end{equation} 

Recall the definition of ${\mathfrak L}_n$ from Theorem \ref{thm:Markov}.
We are able to define a bijective correspondence between ${\mathfrak F}_n$ and ${\mathfrak L}_n$. Setting 
\[
\ell_k( x) = c_n^{-1} {\mathfrak f}_k( x {\mathbf 1}_k)
\]
defines a map ${\mathcal R}: ({\mathfrak f}_1, {\mathfrak f}_2, \ldots, {\mathfrak f}_n) \mapsto (\ell_1, \ell_2, \ldots, \ell_n )$. Somewhat suprisingly this is invertible, with an inverse which takes an explicit form - see Propositions \ref{representation1} and \ref{representation2}. In fact it's easy to see from the form of ${\mathcal R}^{-1}$ that if ${\mathfrak F}_n$ and ${\mathfrak L}_n$ are each equipped with the topology of locally uniform convergence, then ${\mathcal R}$ beceomes a homeomorphism between these spaces.

We use $ {\mathcal R}$ to induce a map
\[
\langle \cdot, \cdot \rangle_{{\mathfrak L}_n}: {\mathfrak L}_n \times {\mathfrak L}_n\rightarrow (0,\infty]^n,
\]
This is defined by setting $\langle {\boldsymbol\ell},   {\boldsymbol\ell}^\prime \rangle_{{\mathfrak L}_n}= (\beta_1, \beta_2, \ldots, \beta_n)$,
where
\[ 
  \beta_k= \frac{1}{k!}\int_{{\mathbf R}^k}   {\mathfrak f}_k(\x) {\mathfrak f}^\prime_k(\x) \Delta_k(\x)^2 \;\rd{\x},
\]
with ${\boldsymbol\ell}={\mathcal R}({\boldsymbol{\mathfrak f}})$ and ${\boldsymbol\ell}^\prime={\mathcal R}({\boldsymbol{\mathfrak f}}^\prime)$.

\begin{theorem}
\label{mainthm2}
For each $x\in{\mathbf R}$, the multiline line process $({\boldsymbol\tau}_{t})_{t>0}=({\boldsymbol\tau}(t,x,\cdot))_{t>0}$ is an ${\mathfrak L}_n$-valued process whose evolution is given by
\[
{\boldsymbol\tau}_{t}={\mathcal R}   {\mathcal M}_{s,t} {\mathcal R}^{-1} {\boldsymbol\tau}_{s} \quad \text { for $0<s<t$}.
\]
Moreover, we have the following flow property, for any $0<s<t$, and for any $x,y\in{\mathbf R}$, with probability one,
\[
{\boldsymbol\tau}(t,x,y)= \langle {\boldsymbol\tau}(s, x,\cdot),  {\boldsymbol\tau}(s,t,\cdot,y) \rangle_{{\mathfrak L}_n},
\]
where ${\boldsymbol\tau}(s,t, \cdot, \cdot)$ is defined analogously to ${\boldsymbol\tau}(t-s, \cdot, \cdot)$ but replacing the white noise with the shifted noise $\dot{W}^{(s)}$.
\end{theorem}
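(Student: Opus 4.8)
The plan is to realise the multiline process ${\boldsymbol\tau}$ as the image under ${\mathcal R}$ of the extended process built from the kernels $M_k$, and then transport the flow property of the latter through the conjugation by ${\mathcal R}$. First I would check that ${\boldsymbol\tau}_t=(\tau_1(t,x,\cdot),\ldots,\tau_n(t,x,\cdot))$ lies in ${\mathfrak L}_n$, its components being continuous and strictly positive by Theorem \ref{thm:MnRegularity} and \eqref{eq:MnBoundary}. Since ${\mathcal R}$ is a bijection between ${\mathfrak F}_n$ and ${\mathfrak L}_n$ (Propositions \ref{representation1} and \ref{representation2}), the element $\boldsymbol{\mathfrak f}^x_t:={\mathcal R}^{-1}{\boldsymbol\tau}_t$ is a well-defined member of ${\mathfrak F}_n$. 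The key point, supplied by the Wronskian analysis of Section \ref{sec:wronskians}, is that the $k$th component of $\boldsymbol{\mathfrak f}^x_t$ is precisely the normalised kernel $M_k(t,x\1_k,\cdot)$ --- the generalised confluent Wronskian of \eqref{eq:MnKM} built from $u(t,x,\cdot)$, whose restriction to the diagonal returns $\tau_k$ through \eqref{eq:MnBoundary}. Thus ${\boldsymbol\tau}_t={\mathcal R}\,\boldsymbol{\mathfrak f}^x_t$ for every $t>0$.

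For the evolution identity I would establish the restart relation $\boldsymbol{\mathfrak f}^x_t={\mathcal M}_{s,t}\boldsymbol{\mathfrak f}^x_s$ for $0<s<t$. The delicate point is that the multiline process issues from singular, delta-type initial data, so ${\mathcal M}_t$ is not directly defined on it and \eqref{eq:Mevolv} does not apply verbatim; but at any $s>0$ the value $\boldsymbol{\mathfrak f}^x_s$ is continuous, strictly positive, bounded in space (by the Gaussian decay of $p_k^*$), measurable with respect to the noise before time $s$, and has the uniformly bounded moments required for ${\mathcal M}_{s,t}$ to act, by the estimates of \cite{LW15}. Since each $M_k(\cdot,x\1_k,\cdot)$ solves \eqref{eq:MnSPDE}, both $M_k(t,x\1_k,\cdot)$ and ${\mathcal M}_{s,t}\boldsymbol{\mathfrak f}^x_s$ satisfy the same equation \eqref{eq:MnSPDEg} on $[s,t]$, with the same data at time $s$ and the shifted noise $\dot{W}^{(s)}$; the existence and uniqueness result of \cite{LW15}, transferred from time $0$ by stationarity of the white noise, forces them to agree. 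Conjugating by ${\mathcal R}$ and using its bijectivity then gives ${\boldsymbol\tau}_t={\mathcal R}\,\boldsymbol{\mathfrak f}^x_t={\mathcal R}{\mathcal M}_{s,t}{\mathcal R}^{-1}{\boldsymbol\tau}_s$.

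To obtain the bilinear flow formula I would unwind ${\mathcal M}_{s,t}$ back onto the kernels through the representation
\[
  M_k^g(r,\mathbf{w}) = \frac{1}{k!}\int_{\R^k} g(\z)\, M_k(r,\z,\mathbf{w})\, \Delta_k(\z)^2 \,\rd{\z},
\]
which I would verify first on the free term --- using $p_k^*=(\Delta_k(\x)/\Delta_k(\y))Q$, the symmetry of $p_k^*$, and the Chapman--Kolmogorov identity for the Karlin--McGregor kernel over $W_k$, which generates exactly the weight $\Delta_k(\z)^2$ and the prefactor $1/k!$ --- and then on the full equation by matching the stochastic integrals and invoking uniqueness. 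Taking $g$ to be the $k$th component of ${\mathcal R}^{-1}{\boldsymbol\tau}(s,x,\cdot)$, namely the normalised $M_k(s,x\1_k,\cdot)$, with $r=t-s$, $\mathbf{w}=y\1_k$ and the shifted noise, and recognising the normalised $M_k^{(s)}(t-s,\cdot,y\1_k)$ as the $k$th component of ${\mathcal R}^{-1}{\boldsymbol\tau}(s,t,\cdot,y)$ (again a confluent Wronskian, now in the first argument), the normalising constants cancel and $\tau_k(t,x,y)$ reduces to $\tfrac{1}{k!}\int_{\R^k}{\mathfrak f}_k(\z){\mathfrak f}^\prime_k(\z)\Delta_k(\z)^2\,\rd{\z}$, which is precisely the $k$th entry of $\langle{\boldsymbol\tau}(s,x,\cdot),{\boldsymbol\tau}(s,t,\cdot,y)\rangle_{{\mathfrak L}_n}$; its finiteness is automatic since it equals $\tau_k(t,x,y)$.

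I expect the main obstacle to be the restart step for singular initial data: one must show that the chaos-defined process $M_k(\cdot,x\1_k,\cdot)$, known a priori only to solve \eqref{eq:MnSPDE}, genuinely coincides on $[s,t]$ with the solution produced by the well-posedness theory of \cite{LW15} started from its own regular value $\boldsymbol{\mathfrak f}^x_s$ under the shifted noise. This rests on a uniqueness statement for \eqref{eq:MnSPDEg} applicable to $\boldsymbol{\mathfrak f}^x_s$, and in particular on verifying the moment, boundedness and measurability hypotheses for $M_k(s,x\1_k,\cdot)$ so that ${\mathcal M}_{s,t}$ may legitimately be applied. A secondary difficulty is confirming that ${\mathcal R}^{-1}$ sends the backward datum ${\boldsymbol\tau}(s,t,\cdot,y)$ to exactly the normalised $M_k^{(s)}(t-s,\cdot,y\1_k)$, which relies on the explicit inversion of Propositions \ref{representation1} and \ref{representation2} and on the generalised-Wronskian identities of Section \ref{sec:wronskians}.
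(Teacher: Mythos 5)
The pivot of your argument---and of the theorem---is the identification of ${\mathcal R}^{-1}{\boldsymbol\tau}_t$ with the tuple of confluent kernels $\bigl(M_k(t,x\mathbf{1}_k,\cdot)\bigr)_{1\leq k\leq n}$, which is equation \eqref{secondv} in the paper. You assert that this is ``supplied by the Wronskian analysis of Section \ref{sec:wronskians}'', but nothing in that section, nor the bijectivity of ${\mathcal R}$ by itself, delivers it. The problem is that injectivity of ${\mathcal R}$ only separates points \emph{inside} ${\mathfrak F}_n$; to conclude that ${\mathcal R}^{-1}{\boldsymbol\tau}_t$ has components $M_k(t,x\mathbf{1}_k,\cdot)$ you must first show that $\bigl(M_k(t,x\mathbf{1}_k,\cdot)\bigr)_k$ \emph{belongs to} ${\mathfrak F}_n$, i.e.\ that there exist continuous functions $f_1,\ldots,f_n$ on $\R$ with $M_k(t,x\mathbf{1}_k,\cdot)=f_1\wedge\cdots\wedge f_k/\Delta_k$. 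At the confluent point this is genuinely nontrivial: the natural candidates $\partial_x^{i-1}u(t,x,\cdot)$ do not exist, since $u$ is not differentiable in $x$. Without membership, \emph{a priori} ${\mathcal R}^{-1}{\boldsymbol\tau}_t$ could be a different element of ${\mathfrak F}_n$ whose components merely share the diagonal restrictions $\tau_k(t,x,\cdot)$. The paper closes exactly this gap by a device absent from your proposal: approximate $x\mathbf{1}_n$ by points $\x(j)\in\R^n_{\neq}$, where the determinantal formula \eqref{eq:MnKM} exhibits the generating functions $f_i=u(t,x_i(j),\cdot)$, and pass to the limit using the locally uniform convergence guaranteed by Theorem \ref{thm:MnRegularity} together with the fact that ${\mathcal R}$ is a \emph{homeomorphism} for the topology of locally uniform convergence, so that ${\mathfrak F}_n$-membership and the relation ${\mathcal R}(\cdots)={\boldsymbol\tau}_t$ survive the limit. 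The same argument is needed a second time for the backward datum, equation \eqref{firstv}; you flag that as a ``secondary difficulty'' but likewise leave it unresolved, and the mechanism you point to (the explicit inversion formulas of Propositions \ref{representation1} and \ref{representation2}) does not by itself produce the identification.

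The rest of your plan is sound and runs parallel to the paper's. The restart step---uniformly bounded moments of $M_k(s,x\mathbf{1}_k,\cdot)$ from \cite{LW15}, measurability with respect to the noise before time $s$, and uniqueness for \eqref{eq:MnSPDEg} driven by the shifted noise---is exactly how the paper justifies applying ${\mathcal M}_{s,t}$ and concluding ${\mathcal M}_{s,t}\bigl(M_k(s,x\mathbf{1}_k,\cdot)\bigr)_k=\bigl(M_k(t,x\mathbf{1}_k,\cdot)\bigr)_k$. For the flow property you genuinely diverge: the paper simply cites Corollary 6.2 of \cite{OW16} (equation \eqref{mflow}) and specialises it to $\x=x\mathbf{1}_n$, $\y=y\mathbf{1}_n$ via \eqref{secondv} and \eqref{firstv}, whereas you propose to rederive that bilinear identity from the kernel representation of solutions to \eqref{eq:MnSPDEg} (matching the free term via Chapman--Kolmogorov for $p_k^*$ and the stochastic term via stochastic Fubini plus uniqueness). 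That route is viable and would make the argument more self-contained, at the price of redoing the moment estimates needed to justify the interchange; but note that it, too, must be fed through the identifications \eqref{secondv} and \eqref{firstv}, so the gap described above has to be filled first in either version.
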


In the light of this theorem, it is clear how the multiline process should be defined starting from an initial condition ${\boldsymbol\ell }\in {\mathfrak L}_n$.  Provided ${\mathcal R}^{-1}({\boldsymbol\ell}) $ is bounded, then  
\begin{equation}
{\boldsymbol\tau}_t^{\boldsymbol \ell}= {\mathcal R} {\mathcal M}_t  {\mathcal R}^{-1}({\boldsymbol  \ell})
\end{equation}
defines an ${\mathfrak L}_n$-valued process, which starts from  ${\boldsymbol\ell }$.  We also have the representation  
\begin{equation}
{\boldsymbol\tau}_t^{\boldsymbol \ell} (y)= \langle {\boldsymbol \ell} , {\boldsymbol\tau}_t(\cdot,y) \rangle_{{\mathfrak L}_n}.
\end{equation}
In essence there is a conjugacy between the random dynamical system on ${\mathfrak L}_n$ which drives the evolution of the multilayer process, and the random dynamical system on ${\mathfrak F}_n$ induced by the stochastic heat equation. More precisely, if ${\mathfrak F}^0_n\subset {\mathfrak F}_n$ is invariant for (some extension of) ${\mathcal M}_{s,t}$ then we can define a conjugate dynamical system on ${\mathcal R}\bigl( {\mathfrak F}^0_n \bigr)\subset {\mathfrak L}_n$ via 
\[
{\mathcal T}_{s,t} = {\mathcal R}  {\mathcal M}_{s,t}  {\mathcal R}^{-1}.
\]

\section{Wronskians for non-smooth functions and the Toda equations}
\label{sec:wronskians}

Suppose $f_1, f_2, \ldots, f_n:{\mathbf R}\rightarrow {\mathbf R} $ are continuous functions such that
\[
\frac{f_1 \wedge f_2 \wedge \ldots \wedge f_n (\x)}{\Delta_n(\x)}
\]
extends continuously to ${\mathbf R}^n$. Then we define the Wronskian $W(f_1, f_2, \ldots f_n) (x)$ to be  the product of the constant $c_n^{-1}$ and the value of the extension at $\x=x{\bf 1}_n$. If the functions $f_1, f_2,\ldots, f_n$ are smooth enough, then this agrees with the usual definition. Hirota's bilinear derivative, which we  denote as $D(f_1,f_2)$ is  the Wronskian  $W(f_2,f_1)=-W(f_1,f_2)$. 
Suppose that $f_2$ is strictly positive. Then we can write 
\[
\frac{f_1 \wedge f_2 (x_1,x_2)}{(x_2-x_1)}= \frac{ f_2(x_2) f_2(x_1)}{(x_2-x_1)} \left( \frac{f_1(x_1)}{f_2(x_1)} -\frac{ f_1(x_2)}{f_2(x_2)}\right),
\]
and so we see a continuous extension to $\{x_1=x_2\}$ existing is equivalent to  the ratio $f_1/f_2$ being continuously differentiable, and then we have
\begin{equation}
\label{ratio}
\frac{f_1(b)}{f_2(b)}-\frac{f_1(a)}{f_2(a)}=\int_a^b \frac{D(f_1,f_2)(x)}{f_2(x)^2} \;\rd{x}.
\end{equation}
The following key lemma is the basis for all the results of this paper. It extends to our generalised Wronskians an easy but important consequence of the Jacobi identity for classical Wronskians.

\begin{lemma}
\label{keylemma}
Suppose that $f_1 \wedge \ldots \wedge f_n/\Delta_n$, $f_1 \wedge  \ldots \wedge  f_n \wedge g/\Delta_{n+1}$,   $f_1 \wedge \ldots \wedge  f_n \wedge h/\Delta_{n+1} $ and $f_1 \wedge  \ldots \wedge f_n \wedge g \wedge h/\Delta_{n+2}$ extend to continuous functions on ${\mathbf R}^{n}$, ${\mathbf R}^{n+1}$, ${\mathbf R}^{n+1}$ and  ${\mathbf R}^{n+2}$ respectively. Further assume $f_1 \wedge  \ldots \wedge  f_n \wedge g/\Delta_{n+1}$ extends as a strictly positive function. Then
\[
\frac{W(f_1, \ldots, f_n,h)}{  W(f_1, \ldots, f_n,g) } \text{ is differentiable}
\]
and
\[
D( W(f_1, \ldots, f_n,h),  W(f_1, \ldots, f_n,g)) = W(f_1, \ldots, f_n)W(f_1, \ldots, f_n,g,h).
\]
\end{lemma}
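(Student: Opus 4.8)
The plan is to reduce both assertions to showing that $P/Q$ is $C^1$ with $(P/Q)^\prime = AB/Q^2$, where I abbreviate $A=W(f_1,\ldots,f_n)$, $Q=W(f_1,\ldots,f_n,g)$, $P=W(f_1,\ldots,f_n,h)$ and $B=W(f_1,\ldots,f_n,g,h)$. The differentiability claim is immediate once this is known, and for the identity one observes that $D(P,Q)=W(Q,P)$ is itself a generalised Wronskian which is defined precisely when $P/Q$ is $C^1$; the discussion culminating in \eqref{ratio}, applied to the pair $(P,Q)$ with $Q>0$, then gives $D(P,Q)=Q^2(P/Q)^\prime = AB$. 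So everything rests on computing the derivative of the single ratio $P/Q$.

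For distinct arguments set $\widetilde A(\z)=f_1\wedge\cdots\wedge f_n(\z)/\Delta_n(\z)$, $\widetilde Q(\z,s)=f_1\wedge\cdots\wedge f_n\wedge g(\z,s)/\Delta_{n+1}(\z,s)$, and likewise $\widetilde P(\z,s)$ and $\widetilde B(\z,s,t)$; by hypothesis these extend to continuous functions, strictly positive in the case of $\widetilde Q$, and on the full diagonal they equal $c_nA$, $c_{n+1}Q$, $c_{n+1}P$, $c_{n+2}B$ respectively. Applying the Jacobi identity quoted in the Introduction to the $(n+2)\times(n+2)$ matrix $\big[\phi_i(w_j)\big]$, with $(\phi_1,\ldots,\phi_{n+2})=(f_1,\ldots,f_n,g,h)$ and nodes $(w_1,\ldots,w_{n+2})=(z_1,\ldots,z_n,s,t)$, after deleting the last two rows and columns is a purely algebraic statement needing no regularity. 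Dividing it by $\Delta_{n+1}(\z,s)\Delta_{n+1}(\z,t)$ and simplifying the Vandermonde quotient turns every minor into a divided determinant, yielding
\[
\widetilde Q(\z,s)\,\widetilde P(\z,t)-\widetilde P(\z,s)\,\widetilde Q(\z,t)=(s-t)\,\widetilde A(\z)\,\widetilde B(\z,s,t),
\]
first at distinct nodes and then, by continuity of all four extensions, for all $(\z,s,t)$.

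Next I would fix $\z$ and regard both sides as functions of the two scalars $s,t$. Dividing by $(s-t)$, the right-hand side is continuous up to $s=t$, so the pair of one-variable functions $\widetilde P(\z,\cdot)$ and $\widetilde Q(\z,\cdot)$ has the property that $\widetilde P(\z,\cdot)\wedge \widetilde Q(\z,\cdot)/\Delta_2$ extends continuously. By the case of \eqref{ratio}, the ratio $R(\z,s):=\widetilde P(\z,s)/\widetilde Q(\z,s)$ is continuously differentiable in its last argument, with
\[
\partial_s R(\z,s)=\frac{\widetilde A(\z)\,\widetilde B(\z,s,s)}{\widetilde Q(\z,s)^2},
\]
an expression jointly continuous in $(\z,s)$. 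Now comes the decisive point: since $\det[\phi_i(w_j)]$ and $\Delta$ are both antisymmetric, $\widetilde P$ and $\widetilde Q$, hence $R$, are \emph{symmetric} in all $n+1$ variables $(z_1,\ldots,z_n,s)$. Thus existence and continuity of this one partial derivative forces every partial of $R$ to exist and be continuous, so $R\in C^1$. Restricting to the diagonal, $P/Q=R(x,\ldots,x)$ (the factors $c_{n+1}$ cancelling) is $C^1$, and by symmetry $\tfrac{d}{dx}(P/Q)=(n+1)\,\partial_s R(x,\ldots,x)=(n+1)\,\tfrac{c_n c_{n+2}}{c_{n+1}^2}\,AB/Q^2$; since $c_k=(\prod_{i=1}^{k-1}i!)^{-1}$ one checks $(n+1)c_nc_{n+2}/c_{n+1}^2=1$, giving $(P/Q)^\prime=AB/Q^2$ as required.

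The hard part will be exactly this passage to the diagonal. Because $P$ and $Q$ individually need not be differentiable, one cannot differentiate them directly; it is only the symmetry of the multivariable extension $R$ that reduces its diagonal derivative to a single partial, which the two-variable base case \eqref{ratio} can evaluate, and that makes the otherwise inaccessible constant bookkeeping collapse to a factor of one. A secondary technical point is justifying the division by the Vandermonde factors in the second step, which is legitimate only off the diagonal and must be extended everywhere by invoking continuity of the four assumed extensions.
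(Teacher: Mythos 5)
Your proof is correct and follows essentially the same route as the paper's own: the algebraic Jacobi (Desnanot) identity for the $(n+2)\times(n+2)$ matrix $[\phi_i(w_j)]$, division by the Vandermonde factors, continuity of the four assumed extensions to pass to coincident arguments, the key observation that symmetry of $\widetilde P/\widetilde Q$ in all $n+1$ variables upgrades the single continuous partial derivative to full continuous differentiability, and finally the diagonal restriction with the bookkeeping $(n+1)\,c_n c_{n+2}/c_{n+1}^2=1$. The only discrepancy is a sign wobble in your intermediate derivative formula (with $\Delta_k(\x)=\prod_{i<j}(x_i-x_j)$, taking $t\to s$ in your displayed two-point identity actually yields $\partial_s R=-\widetilde A\widetilde B/\widetilde Q^2$), but this is exactly compensated in your last step because $D(P,Q)=W(Q,P)=-Q^2(P/Q)^\prime$ rather than $+Q^2(P/Q)^\prime$ as equation \eqref{ratio} is printed --- a sign inconsistency already present in the paper between its definition of $D$ and \eqref{ratio}, and the paper's own proof contains the matching self-cancelling slip in its quoted Vandermonde ratio, so the stated identity $D(W(f_1,\ldots,f_n,h),W(f_1,\ldots,f_n,g))=W(f_1,\ldots,f_n)W(f_1,\ldots,f_n,g,h)$ emerges correctly in both arguments.
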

\begin{proof}   
Denote ${f_1 \wedge f_2 \ldots \wedge f_n }/{\Delta_n}$,${f_1 \wedge f_2 \ldots \wedge f_n \wedge g}/{\Delta_{n+1}}$, ${f_1 \wedge f_2 \ldots \wedge f_n \wedge h }/{\Delta_{n+1}}$ and ${f_1 \wedge f_2 \ldots \wedge f_n \wedge g \wedge h}/{\Delta_{n+2}}$ by $F$, $G$,$H$ and $K$ respectively. Let ${\bf x}= (x_1, x_2, \ldots, x_n)$ be a point in $ {\mathbf R}^n$, and for $ y,z \in {\mathbf R}$ consider points $({\bf x},y),({\bf x},y)\in{\mathbf R}^{n+1}$ and  $({\bf x},y,z)\in{\mathbf R}^{n+2}$. The Jacobi identity gives
\begin{multline*}
f_1 \wedge f_2 \wedge \ldots \wedge f_n \wedge g( {\bf x},y) f_1 \wedge f_2 \wedge \ldots \wedge f_n  \wedge h ({\bf x},z)-\\
f_1 \wedge f_2 \wedge \ldots \wedge f_n \wedge h( {\bf x},y) f_1 \wedge f_2 \wedge \ldots \wedge f_n \wedge g  ({\bf x},z)=
 \\
f_1 \wedge f_2 \wedge \ldots \wedge f_n( {\bf x}) f_1 \wedge f_2 \wedge \ldots \wedge f_n \wedge g \wedge h ({\bf x},y,z)
\end{multline*}
Now, as can be easily be checked  directly,
\[
\frac{\Delta_{n+1}({\bf x},y) \Delta_{n+1}({\bf x},z)}{\Delta_{n}({\bf x}) \Delta_{n+2}({\bf x},y,z)}=\frac{1}{z-y},
\]
and so we deduce that
\[
\frac{1}{z-y} \bigl(  G( {\bf x},y) H ({\bf x},z)-H( {\bf x},y)G  ({\bf x},z)\bigr)=F({\bf x}) K({\bf x},y,z).
\]
Notice that the continuity of $F$, $G$, $H$ and $K$ implies that we no longer need to assume that $x_1, x_2, \ldots x_n,y,z$ are all distinct. Moreover dividing through by $ G( {\bf x},y) G( {\bf x},z)$ which by hypothesis is strictly positive, the same continuity implies that we can take a limit as $y-z$ tends to $0$ and thereby deduce that 
\[
y \mapsto \frac{H({\bf x},y)}{G({\bf x},y)} \text{ is continuously differentiable} 
\]
and that its derivative is ${F({\bf x}) K({\bf x},y,y)}/{G({\bf x},y)^2}.$  But $G$ and $H$, and hence the ratio $H/G$ are symmetric functions on ${\mathbf R}^{n+1}$, and so in fact the partial derivative in each variable of  $H/G$ exists and is continuous. By a standard result from calculus this implies that $H/G$ is a differentiable function on ${\mathbf R}^{n+1}$. In particular, we have
\[
 x \mapsto \frac{H(x {\bf 1}_n)}{G(x {\bf 1}_n)} \text{ is continuously differentiable}   
\]
and its derivative, given by summing the partial derivatives, is
\[
(n+1)\frac{F({x\bf 1_n}) K(x{\bf 1}_{n+2})}{G(x{\bf 1}_{n+1})^2}.
\]
In view of the definition of the Wronskian $W(f_1, \ldots f_n)= c_n^{-1} F({x\bf 1_n})$ and similarly for the other Wronskians appearing in the statement, the lemma is proved, noting that$ c_n c_{n+2}/c_n^2=1/(n+1)$.
\end{proof}

We turn now to determinants constructed from a kernel function of two real variables. When a continuous  extension exists in this setting we can define a quantity that generalizes the notion of bi-directional Wronskians, and as in the case of smooth functions these give rise to solutions of the two dimensional Toda equations. This is the content of the following proposition, the proof of which is to first apply the preceeding lemma in one of the variables of the kernel, and then repeat the arguments used in the proof of the lemma in the second variable. 

\begin{proposition}
Suppose that $u$ is a continuous function defined on ${\mathbf R} \times {\mathbf R}$ such that, for every $n \geq 1$,
\[
  ( \x,\y) \mapsto \frac{ \det[ u(x_i,y_j) ]_{n \times n}}{ \Delta_n(\x) \Delta_n(\y)} 
\]
extends to a strictly postive, continuous function $M_n$ on ${\mathbf R}^n\times {\mathbf R}^n$. Define, for $n\geq 1$, $\tau_n$ via
\[
\tau_n(x,y)=c_n^{-2} M_n( x{\bf 1}_n, y{\bf 1}_n),
\]
and $\tau_0(x,y)=1$.
Then $(\tau_n; n \geq 1)$ satisfy the two dimensional Toda equations in integrated form
\[
  \log \left( \frac{\tau_n(a,c)\tau_n(b,d)}{\tau_n(a,d)\tau_n(b,c)}\right)= \int_a^b \int_c^d \frac{\tau_{n-1}(x,y) \tau_{n+1}(x,y)}{\tau_n(x,y)^2} \;\rd{y}\rd{x}.
\]
\end{proposition}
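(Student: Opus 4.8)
The plan is to peel the statement into a once-integrated form of the Toda equation and to produce that form by using Lemma~\ref{keylemma} in the first variable and re-running its proof in the second. Since the hypotheses make every $M_m$, and hence every $\tau_m$, continuous and strictly positive, all the logarithms and integrands below are continuous. Writing the left-hand side as the mixed increment $[\log\tau_n(b,\cdot)-\log\tau_n(a,\cdot)]$ evaluated between $c$ and $d$, it suffices --- granting that $y\mapsto\log\tau_n(x,y)$ is $C^1$, which the main step will establish --- to prove the pointwise identity
\begin{equation}
\partial_y\log\tau_n(b,y)-\partial_y\log\tau_n(a,y)=\int_a^b\frac{\tau_{n-1}(x,y)\,\tau_{n+1}(x,y)}{\tau_n(x,y)^2}\,\mathrm{d}x,
\label{eq:onestep}
\end{equation}
after which the claim follows by integrating \eqref{eq:onestep} in $y$ over $[c,d]$ and applying Fubini's theorem to the continuous integrand on the rectangle.

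First I would fix the first variable as the Wronskian variable. For distinct reals $y_1,\dots,y_{n-1},c,d$, apply Lemma~\ref{keylemma} in this variable to the functions $f_i=u(\cdot,y_i)$, $g=u(\cdot,c)$, $h=u(\cdot,d)$, ordered so that the quotient defining $W(f_1,\dots,f_{n-1},g)$ extends to a strictly positive function (possible since $M_n>0$). Combining the conclusion of the lemma with \eqref{ratio} gives the integral representation
\begin{equation}
\left[\frac{W(f_1,\dots,f_{n-1},h)}{W(f_1,\dots,f_{n-1},g)}\right]_{x=a}^{x=b}=\int_a^b\frac{W(f_1,\dots,f_{n-1})\,W(f_1,\dots,f_{n-1},g,h)}{W(f_1,\dots,f_{n-1},g)^2}\,\mathrm{d}x.
\label{eq:phase1}
\end{equation}
Using the identification $W(u(\cdot,y_1),\dots,u(\cdot,y_m))(x)=c_m^{-1}\Delta_m(\mathbf{y})\,M_m(x\mathbf{1}_m,\mathbf{y})$, which is immediate from the definitions, every Wronskian in \eqref{eq:phase1} is rewritten through the continuous functions $M_{n-1},M_n,M_{n+1}$, and the common Vandermonde prefactors cancel between numerator and denominator.

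Next I would repeat the internal argument of the lemma in the second variable. Letting $y_1,\dots,y_{n-1}\to y$, continuity of the $M_m$ (after cancelling the surviving $\Delta_{n-1}(\mathbf{y})$ factors) turns \eqref{eq:phase1} into an identity in which $W(f_1,\dots,f_{n-1})$ has become $\tau_{n-1}(x,y)$ and the two order-$n$ Wronskians have become the partially confluent objects $M_n(x\mathbf{1}_n,(y\mathbf{1}_{n-1},c))$ and $M_n(x\mathbf{1}_n,(y\mathbf{1}_{n-1},d))$. Sending $c\to y$ collapses the denominator to $c_n^2\tau_n(x,y)$ and, crucially, leaves the right-hand side equal to $(y-d)$ times a quantity continuous in $d$; dividing by this factor and letting $d\to y$ --- exactly the ``divide by the strictly positive $\tau_n$ and take the confluent limit'' step from the proof of Lemma~\ref{keylemma}, now carried out in the $y$-direction --- simultaneously shows that the relevant ratio is differentiable in $y$ and evaluates its derivative. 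The symmetric function $M_n$ contributes a factor $n$ when one differentiates in a single one of its $n$ confluent arguments, and this combines with the constant identity $c_{n-1}c_{n+1}/c_n^2=1/n$ so that all normalising constants collapse, leaving precisely \eqref{eq:onestep}; the Toda integrand $\tau_{n+1}(x,y)$ appears as the fully confluent limit of $W(f_1,\dots,f_{n-1},g,h)$.

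The hard part will be this third step: justifying the iterated confluent limits and, above all, extracting genuine $y$-differentiability of $\log\tau_n$ from nothing more than the assumed continuity and strict positivity of the $M_m$. This is where the Jacobi identity does the real work, since, as in the lemma, it is the device that upgrades continuity to differentiability once one divides by a strictly positive confluent Wronskian; consequently the argument must be run symmetrically in $y$ rather than merely quoting the lemma, whose statement is one-dimensional. A secondary technical point is the interchange of the $y$-limit and $y$-derivative with the $x$-integral in \eqref{eq:phase1}; this I would control using the local uniform bounds afforded by continuity of the $M_m$ and strict positivity of $\tau_n$ on the compact rectangle $[a,b]\times[c,d]$, which keep all integrands uniformly bounded and equicontinuous and thereby license the limit, the differentiation, and the final use of Fubini's theorem.
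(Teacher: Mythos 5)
Your plan is, up to exchanging the roles of the two variables, the same as the paper's proof (the paper takes $f_i=u(x_i,\cdot)$, obtains differentiability in the first variable of $u$, and integrates in $x$ last; you take $f_i=u(\cdot,y_i)$, aim at differentiability in the second variable, and integrate in $y$ last), but two of your steps are genuinely flawed. The first is the reduction: you assert that $y\mapsto\log\tau_n(x,y)$ is $C^1$ for each fixed $x$, and you write your once-integrated identity as a difference of two such derivatives. Under the hypotheses of the proposition this is false: in the intended application $u(x,y)=u(t,x,y)$ is the solution of the stochastic heat equation, so $\tau_1(x,\cdot)=u(x,\cdot)$ is merely H\"older continuous of order up to $1/2$ and is not differentiable anywhere. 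The hypotheses only ever yield differentiability of \emph{ratios}, and the identity must be stated as
\begin{equation*}
\partial_y \log\frac{\tau_n(b,y)}{\tau_n(a,y)}=\int_a^b \frac{\tau_{n-1}(x,y)\,\tau_{n+1}(x,y)}{\tau_n(x,y)^2}\;\rd{x},
\end{equation*}
with the two logarithmic derivatives never separated; this is exactly the form (for ratios of $M$'s) that the paper proves, and it suffices for the final integration.

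The second, more serious, gap is the order of your confluent limits. Collapsing $y_1,\ldots,y_{n-1}\to y$ and $c\to y$ \emph{before} dividing by $(y-d)$ and letting $d\to y$ produces only the existence of
\begin{equation*}
\lim_{d\to y}\frac{1}{y-d}\left[\frac{M_n(b\1_n,(y\1_{n-1},d))}{M_n(b\1_n,y\1_n)}-\frac{M_n(a\1_n,(y\1_{n-1},d))}{M_n(a\1_n,y\1_n)}\right],
\end{equation*}
that is, a difference quotient in a \emph{single} coordinate of the symmetric function $\y'\mapsto M_n(b\1_n,\y')/M_n(a\1_n,\y')$, evaluated \emph{at a diagonal point}. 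That is not enough: existence of one partial derivative at diagonal points does not imply that the diagonal restriction $y\mapsto\tau_n(b,y)/\tau_n(a,y)$ is differentiable there, let alone that its derivative is $n$ times that partial. For instance $F(y_1,y_2)=(y_1y_2)^{1/3}$ is symmetric and continuous, both partials exist (and vanish) at the origin, yet $y\mapsto F(y,y)=|y|^{2/3}$ is not differentiable at $0$. The factor $n$ you invoke is legitimate only once the symmetric function is known to be totally differentiable, and the route to that --- the one followed both in the proof of Lemma \ref{keylemma} and in the paper's proof of this proposition --- is to keep $\mathbf{y}$ and $c$ generic, take $d\to c$ there (after rewriting your difference of ratios as a difference quotient of the single ratio $M_n(b\1_n,\cdot)/M_n(a\1_n,\cdot)$, using that both expressions share the numerator $M_n(b\1_n,(\mathbf{y},d))M_n(a\1_n,(\mathbf{y},c))-M_n(a\1_n,(\mathbf{y},d))M_n(b\1_n,(\mathbf{y},c))$), check that the resulting partial derivative is a continuous function of $(\mathbf{y},c)$, invoke the standard continuous-partials-imply-differentiability theorem, and only then restrict to the diagonal. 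With the limits reordered in this way, and the first point corrected, your argument closes and coincides with the paper's up to transposition of the variables.
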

\begin{proof}
For any choice of distinct $x_1,x_2,\ldots ,x_n, y,z\in{\mathbf R}$ we can apply the preceeding lemma to the functions $f_i(\cdot)=u(x_i, \cdot)$ for $i=1,2,\ldots,n$, $g(\cdot)= u(y, \cdot) $, and $h(\cdot)= u(z, \cdot) $ the necessary extensions existing by virtue of the existence of $M_n,M_{n+1}$ and $M_{n+2}$. Noting that
\[
M_n( \x, c{\bf 1}_n)= c_n^{-1} \frac{ W(f_1,f_2, \ldots, f_n)(c)} {\Delta_n(\x)},
\]
with similar expressions for the Wronskians $ W(f_1,f_2, \ldots, f_n,g)$, $ W(f_1,f_2, \ldots, f_n,h)$ and  $W(f_1,f_2, \ldots, f_n,g,h)$ and using  
\[
\frac{\Delta_{n+1}({\bf x},y) \Delta_{n+1}({\bf x},z)}{\Delta_{n}({\bf x}) \Delta_{n+2}({\bf x},y,z)}=\frac{1}{z-y},
\]
we can write the conclusion of applying the lemma as
\begin{multline*}
\frac{1}{z-y}\left( \frac{M_{n+1}( (\x,z), d{\bf 1}_{n+1})} {M_{n+1}( (\x,z), c{\bf 1}_{n+1})}- \frac{M_{n+1}( (\x,y), d{\bf 1}_{n+1})} {M_{n+1}( (\x,y), c{\bf 1}_{n+1})}  \right)=\\  \frac{M_{n+1}( (\x,y), d{\bf 1}_{n+1})} {M_{n+1}( (\x,z), c{\bf 1}_{n+1})}
\int_c^d \frac{ M_n(\x,v{\bf 1}_n) M_{n+2}((\x,y,z),v{\bf 1}_{n+2})}{M_{n+1}((\x,y),v{\bf 1}_{n+1})^2} \rd{v}. 
\end{multline*}
Since the right hand side is a continuous function of $\x$, $y$ and $z$ we can both drop the assumption that $x_1, x_2, \ldots, x_n,y$ are all distinct, and then let $z-y$ tend to $0$ and deduce that 
\[
y \mapsto \frac{M_{n+1}( (\x,y), d{\bf 1}_{n+1})} {M_{n+1}( (\x,y), c{\bf 1}_{n+1})}
\]
is continuously differentiable with derivative
\[
\frac{M_{n+1}( (\x,y), d{\bf 1}_{n+1})} {M_{n+1}( (\x,y), c{\bf 1}_{n+1})}
\int_c^d \frac{ M_n(\x,v{\bf 1}_n) M_{n+2}((\x,y,y),v{\bf 1}_{n+2})}{M_{n+1}((\x,y),v{\bf 1}_{n+1})^2} \;\rd{v}.
\]
 $M_{n+1}( \cdot, d{\bf 1}_{n+1})/ M_{n+1}( \cdot, c{\bf 1}_{n+1})$ being a symmetric function on ${\mathbf R}^{n+1}$, this shows that all its partial deriavtives exist and are continuous, and hence, that it is in fact continuously differentiable. Then we calculate 
\begin{multline*}
  \frac{\rd{}}{\rd{x}}\left(\frac{M_{n+1}( x{\bf 1}_{n+1}, d{\bf 1}_{n+1})}{ M_{n+1}( x{\mathbf 1}_{n+1}, c{\bf 1}_{n+1})}\right)= \\
(n+1)\frac{M_{n+1}( (x{\bf 1}_{n+1}, d{\bf 1}_{n+1})} {M_{n+1}( ((x{\bf 1}_{n+1}, c{\bf 1}_{n+1})}
\int_c^d \frac{ M_n( (x{\bf 1}_{n},v{\bf 1}_n) M_{n+2}((x{\bf 1}_{n+2},v{\bf 1}_{n+2})}{M_{n+1}((x{\bf 1}_{n+1},v{\bf 1}_{n+1})^2} \;\rd{v}
\end{multline*}
from which the Toda equation follows easily.

\end{proof}

In view of Theorem \ref{thm:MnRegularity}, and the representation \eqref{eq:MnKM}, the above proposition applies directly to $u(x,y)=u(t,x,y)$,  the jointly continuous version of the solution to the stochastic heat equation.

\section{Construction of the conjugacy and proof of theorems}
\label{sec:conjugacy}

We begin with   a lemma that ensures all the Wronskians we need exist.

\begin{lemma}
\label{minors}
Suppose $f_1, f_2, \ldots, f_n:{\mathbf R}\rightarrow {\mathbf R} $ are continuous functions such that $f_1 \wedge f_2 \wedge \ldots \wedge f_n /\Delta_n$ extends continuously to ${\mathbf R}^n$. Suppose also that $f_1,f_2, \ldots, f_n$  are linearly independent. Then, $f_{i_1} \wedge f_{i_2} \wedge \ldots \wedge f_{i_k} /\Delta_k$ extends continuously to ${\mathbf R}^k $ for any subset $\{i_1,i_2, \ldots, i_k\} \subseteq \{1,2,\ldots,n\}$. 
\end{lemma}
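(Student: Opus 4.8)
The plan is to induct on $n$. The base case $n=1$ is immediate, since $\Delta_1\equiv 1$ and $f_1$ is continuous. For the inductive step the crucial reduction is to show that every \emph{omit-one} subfamily $f_1\wedge\ldots\wedge\widehat{f_i}\wedge\ldots\wedge f_n/\Delta_{n-1}$ extends continuously to $\R^{n-1}$. Once this is known, each such subfamily is still linearly independent (being a subset of a linearly independent family) and satisfies the hypothesis of the lemma at level $n-1$, so the inductive hypothesis yields the extension of every subfamily of size $\le n-1$. Since any proper subset $\{i_1,\ldots,i_k\}$ is contained in some omit-one family, and the size-$n$ case is the hypothesis itself, this covers all subsets.

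To produce the omit-one extensions I would fix $\x=(x_1,\ldots,x_{n-1})\in\R^{n-1}$, set $x_n=t$, and expand the determinant $f_1\wedge\ldots\wedge f_n(\x,t)$ along the column corresponding to the point $t$. Writing $F$ for the given continuous extension of $f_1\wedge\ldots\wedge f_n/\Delta_n$ and $G_i$ for $f_1\wedge\ldots\wedge\widehat{f_i}\wedge\ldots\wedge f_n/\Delta_{n-1}$, and using the elementary identity $\Delta_n(\x,t)=\Delta_{n-1}(\x)\prod_{j=1}^{n-1}(x_j-t)$, the Laplace expansion rearranges to
\[
F(\x,t)\prod_{j=1}^{n-1}(x_j-t)=\sum_{i=1}^{n}(-1)^{i+n}f_i(t)\,G_i(\x),
\]
valid whenever the coordinates are distinct. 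For fixed distinct $\x$ both sides are continuous in $t$, so the identity in fact holds for every $t\in\R$. The key observation is that this exhibits the unknown quantities $G_i(\x)$ as the coefficients in an expansion of a \emph{known} function of $t$ in terms of $f_1,\ldots,f_n$; linear independence forces these coefficients to be uniquely determined.

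To extract them I would invoke the standard fact that linear independence of continuous $f_1,\ldots,f_n$ is equivalent to the existence of sample points $t_1,\ldots,t_n$ with $\det[f_i(t_l)]_{i,l}\neq 0$. Evaluating the displayed identity at $t=t_1,\ldots,t_n$ gives a linear system $A\,(G_1(\x),\ldots,G_n(\x))^{\mathsf T}=\mathbf{b}(\x)$, where $A=[(-1)^{i+n}f_i(t_l)]_{l,i}$ is invertible (its columns differ from those of $[f_i(t_l)]_{i,l}$ only by signs) and the right-hand side $b_l(\x)=F(\x,t_l)\prod_{j}(x_j-t_l)$ is, by the assumed continuity of $F$ on all of $\R^n$, continuous in $\x$ on all of $\R^{n-1}$. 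Hence $(G_1,\ldots,G_n)^{\mathsf T}=A^{-1}\mathbf{b}(\x)$ on the dense set of distinct-coordinate points, and the right-hand side furnishes the sought continuous extension of each $G_i$ to $\R^{n-1}$.

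The main obstacle, and the place where the hypothesis of linear independence is essential, is precisely this inversion step: without linear independence the sampling matrix $A$ is singular and the $G_i$ cannot be recovered from $F$ alone. (At $n=3$ one sees the conclusion genuinely fail by taking $f_1=f_3$ and $f_2(x)=|x|$, for which $f_1\wedge f_2\wedge f_3\equiv 0$ extends trivially yet $f_1\wedge f_2/\Delta_2$ does not extend at the diagonal.) The remaining ingredients are routine: the Laplace/Vandermonde bookkeeping, the persistence of the identity for all $t$ by continuity, and the standard equivalence between linear independence and nonsingularity of some sampling matrix.
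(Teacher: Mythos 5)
Your proof is correct and follows essentially the same route as the paper's: Laplace expansion of $f_1\wedge\ldots\wedge f_n$ in one variable, a sampling matrix made invertible by linear independence, and inversion of the resulting linear system to express each omit-one minor (divided by $\Delta_{n-1}$) as a linear combination of continuous functions, followed by iteration/induction to reach general subsets. The only differences are cosmetic (you expand in the last variable and divide by the Vandermonde before solving, and you add a nice counterexample showing linear independence is needed), so there is nothing to change.
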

\begin{proof}
According to Laplace's expansion for determinants,
\begin{multline*}
f_1 \wedge f_2 \wedge \ldots \wedge f_n (x_1,x_2,\ldots,x_n)= \\
 \sum_{k=1}^n (-1)^{k+1} f_k(x_1) f_1\wedge \ldots \wedge f_{k-1}\wedge f_{k+1} \wedge \ldots \wedge f_n ( x_2,\ldots,x_n).
\end{multline*}
Now the linear independence of $f_1, f_2, \ldots, f_n$ ensures $f_1 \wedge f_2 \wedge \ldots \wedge f_n$ is not identically zero, and so we can find $z_1, z_2, \ldots, z_n$ so that the matrix $A$ with $(j,k)$th entry $f_{k}(z_j)$ is invertible. So the $n$ linear equations arising from Laplace's expansion 
by choosing $ x_1=z_1, z_2, \ldots,z_n$ successfully can be solved to give, for each $k$, 
\begin{multline*}
f_1\wedge \ldots \wedge f_{k-1}\wedge f_{k+1} \wedge \ldots \wedge f_n ( x_2,\ldots,x_n)= \\ (-1)^{k+1} \sum_{j=1}^n A^{-1}_{kj} f_1 \wedge f_2 \wedge \ldots \wedge f_n (z_j,x_2,\ldots,x_n).
\end{multline*}
Now dividing this equation through by $\Delta_{n-1}(x_2,\ldots x_n)$ the right hand side is a linear combination of continuous functions on ${\mathbf R}^{n-1}$ which proves the assertion of the lemma for $\{i_1,i_2, \ldots, i_{n-1}\} = \{1,2,\ldots,n\} \setminus \{k\}$. The general case follows by iterating the argument.
\end{proof}

A sequence of continuous functions $f_1, f_2, \ldots, f_n$ with the property that for each $k \in \{1,2,\ldots, n\}$, $f_1\wedge f_2\wedge \ldots \wedge f_k(\x)/\Delta_k(\x)$ extends to a continuous, strictly positive function on ${\mathbf R}^k$ form an extended, complete Tchebysheff system, as defined in \cite[Chapter 6]{Ka68}, where $f_1,\ldots, f_n$ are assumed to be $(n-1)$-times continuously differentiable so that Wronskians exist in the classical sense. The following proposition is essentially a generalisation to the non-differentiable case of a representation for such a system in terms of its successive Wronskians, see \cite[Chapter 6, Theorem 1.1]{Ka68}. In the statement of the proposition the  integrals denote any choice of indefnite integral.

\begin{proposition}
\label{representation1} Suppose  $f_1, f_2, \ldots, f_n$  are continuous functions with the property that 
that, for each $k \in \{1,2,\ldots, n\}$, $f_1\wedge f_2\wedge \ldots \wedge f_k(\x)/\Delta_k(\x)$ extends to a continuous, strictly positive function on ${\mathbf R}^k$.   Denote $W(f_1, f_2, \ldots, f_k)$ by $\ell_k$ for $ 1\leq k\leq n$.
Then $f_1 \wedge f_2 \wedge \ldots \wedge f_k$  for $k=1, 2, \ldots, n$ can be recovered from $\ell_1, \ell_2, \ldots, \ell_n$: indeed if we define
\begin{align*}
\tilde{f}_1&= \ell_1, \\
\tilde{f}_2&= \ell_1\int \frac{\ell_2}{\ell_1^2}, \\
\tilde{f}_3&= \ell_1\int \frac{\ell_2}{\ell_1^2} \int \frac{\ell_1 \ell_3}{\ell^2_2}, \\
 \tilde{f}_n&= \ell_1\int \frac{\ell_2}{\ell_1^2} \int \frac{\ell_1 \ell_3}{\ell^2_2} \int \cdots \int \frac{\ell_{n-2}\ell_n}{\ell_{n-1}^2}, \\
\end{align*}
then 
\[
f_1 \wedge f_2 \wedge \ldots \wedge f_k= \tilde{f}_1 \wedge \tilde{f}_2 \wedge \ldots \wedge \tilde{f}_k.
\]
\end{proposition}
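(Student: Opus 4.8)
The plan is to prove a slightly stronger statement: that the passage from $(f_1,\dots,f_n)$ to $(\tilde f_1,\dots,\tilde f_n)$ is a unipotent lower-triangular change of basis with constant coefficients, i.e. that $\tilde f_k-f_k\in\mathrm{span}(f_1,\dots,f_{k-1})$ for every $k$. Because $f_1\wedge\dots\wedge f_m$ is multilinear and alternating, such a change leaves every wedge $f_1\wedge\dots\wedge f_k$ invariant, so it at once yields the desired equalities $f_1\wedge\dots\wedge f_k=\tilde f_1\wedge\dots\wedge\tilde f_k$. I would prove the stronger statement by induction on $n$. The case $n=1$ is immediate, since $c_1=1$ forces $\tilde f_1=\ell_1=W(f_1)=f_1$.

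For the inductive step I reduce by the first function. As the top flag-wedge is strictly positive, $f_1,\dots,f_n$ are linearly independent, so by Lemma \ref{minors} each $f_1\wedge f_i/\Delta_2$ extends continuously; since $f_1>0$, equation \eqref{ratio} shows $f_i/f_1$ is continuously differentiable and $g_i:=(f_i/f_1)'=W(f_1,f_i)/f_1^2$ is a well-defined continuous function for $i=2,\dots,n$. Writing $w_1=\ell_1$ and $w_k=\ell_{k-2}\ell_k/\ell_{k-1}^2$, so that $\tilde f_k=w_1\int w_2\int\cdots\int w_k$, pulling out $w_1=f_1$ gives the factorisation $\tilde f_k=f_1\int\tilde h_{k-1}$, where $\tilde h_1,\dots,\tilde h_{n-1}$ are built by the same formula from the shifted weights $(w_2,\dots,w_n)$. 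These $\tilde h_j$ are exactly the P\'olya functions of the reduced system $(g_2,\dots,g_n)$, \emph{provided} reduction transforms the successive Wronskians by $W(g_2,\dots,g_j)=\ell_j/\ell_1^j$.

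The heart of the argument, and the step I expect to be the main obstacle, is to show that reduction preserves the hypotheses: that $(g_2,\dots,g_n)$ is again a Tchebysheff system, every flag-wedge $g_2\wedge\dots\wedge g_j/\Delta_{j-1}$ extending to a continuous, strictly positive function, together with the Wronskian identity above. The difficulty is that Lemma \ref{keylemma} only upgrades \emph{already existing} continuous extensions to differentiable ratios; it does not by itself manufacture the new extensions for the reduced system. I would obtain them by matching Wronskians of the reduced system with Wronskians of the original one taken relative to a flag: since $g_i=W(f_1,f_i)/f_1^2$, the ratio $g_i/g_2$ equals $W(f_1,f_i)/W(f_1,f_2)$, and Lemma \ref{keylemma} applied to the original system with the flag $\{f_1\}$ and denominator $W(f_1,f_2)>0$ shows this ratio is $C^1$; by \eqref{ratio} this is precisely the statement that $g_2\wedge g_i/\Delta_2$ extends. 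Iterating, one reduces the reduced system again and invokes Lemma \ref{keylemma} with the flags $\{f_1\}$, $\{f_1,f_2\}$, $\dots$ of increasing length, always placing a flag-consecutive (hence strictly positive) Wronskian in the denominator and using Lemma \ref{minors} to supply the intermediate sub-wedges. This propagates the extension and strict positivity up to the top flag-wedge $g_2\wedge\dots\wedge g_n/\Delta_{n-1}$, while the value $\ell_j/\ell_1^j$, and with it the positivity, is pinned down by the constant bookkeeping in the reduction identity (which can also be read off from the confluent, diagonal limit of the elementary row-reduction/Cauchy--Binet identity expressing $f_1\wedge\dots\wedge f_k$ as $\prod_i f_1(x_i)$ times an iterated box-integral of $g_2\wedge\dots\wedge g_k$).

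Granting reduction-preservation, the induction closes. Applying the inductive hypothesis to the $(n-1)$-function system $(g_2,\dots,g_n)$ gives $\tilde h_{k-1}-g_k\in\mathrm{span}(g_2,\dots,g_{k-1})$ with constant coefficients. Integrating this relation and multiplying by $f_1$, and using $f_1\int g_i=f_1\big(f_i/f_1+\text{const}\big)=f_i+(\text{const})f_1$, converts it into $\tilde f_k-f_k\in\mathrm{span}(f_1,\dots,f_{k-1})$; the arbitrary integration constants allowed in the statement are harmlessly absorbed into the coefficients of $f_1,\dots,f_{k-1}$. This establishes the stronger statement and completes the induction.
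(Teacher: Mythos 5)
Your overall strategy (a unipotent triangular change of basis $\tilde f_k - f_k \in \mathrm{span}(f_1,\ldots,f_{k-1})$, which leaves all wedges invariant) is sound, and the base case and final span bookkeeping are fine. But there is a genuine gap at exactly the step you yourself flag as the heart of the argument: verifying that the reduced system $(g_2,\ldots,g_n)$, $g_i=(f_i/f_1)'$, satisfies the hypotheses of the proposition, i.e.\ that \emph{every} flag-wedge $g_2\wedge\cdots\wedge g_j/\Delta_{j-1}$ extends continuously and strictly positively. The mechanism you propose --- Lemma \ref{keylemma} applied to the original system along the flags $\{f_1\},\{f_1,f_2\},\ldots$, with Lemma \ref{minors} supplying sub-wedges --- only ever produces statements of the form ``$W(f_1,\ldots,f_r,f_i)/W(f_1,\ldots,f_r,f_{r+1})$ is $C^1$'', which via the discussion around \eqref{ratio} are precisely the \emph{two}-function wedge extensions of the $r$-times reduced system ($g_2\wedge g_i/\Delta_2$, then $h_3\wedge h_i/\Delta_2$, and so on). They never yield the wedges of three or more functions of the once-reduced system, e.g.\ $g_2\wedge g_3\wedge g_4/\Delta_3$, which your inductive call needs already when $n=4$. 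Lemma \ref{keylemma} cannot produce that extension when applied to the reduced system either, because the existence of that very extension is one of its hypotheses, not a conclusion. To pass from the two-function wedges at every reduction level back up to the higher wedges of $(g_2,\ldots,g_n)$, you would write $g_j$, modulo span, as iterated integrals $g_2\int h_3\int\cdots$ and then need continuity up to the diagonal of the resulting interlacing ``box-integral'' representation of $g_2\wedge\cdots\wedge g_j/\Delta_{j-1}$; but that is exactly Proposition \ref{representation2} of the paper, proved there by a nontrivial random-matrix (GUE minor) argument, which you neither prove nor invoke --- your parenthetical use of the box-integral identity for ``constant bookkeeping'' does not address this confluent-limit continuity.

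It is worth seeing how the paper's own proof avoids the issue entirely: it never forms the reduced system. For each fixed $k$ it iterates Lemma \ref{keylemma} directly along the original flags, obtaining $W(f_1,\ldots,f_{r-1},f_k)=W(f_1,\ldots,f_r)\int W(f_1,\ldots,f_{r-1})\,W(f_1,\ldots,f_r,f_k)/W(f_1,\ldots,f_r)^2$ for $r=2,\ldots,k$, where every continuous extension required is supplied either by the hypothesis or by Lemma \ref{minors} applied to the original functions; substituting these successively into $f_k=f_1\int W(f_1,f_k)/f_1^2$ unwinds to $f_k=\tilde f_k$ modulo $\mathrm{span}(\tilde f_1,\ldots,\tilde f_{k-1})$ contributed by the constants of integration, which do not affect the determinants. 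Since you already have all the ingredients of this substitution (the same two lemmas, the same flags), reorganizing your induction as this direct unwinding closes the argument without ever needing the reduced system to be a Tchebysheff system.
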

\begin{proof}
First note that since $W(f_1,f_k)$ exists by the previous lemma, and $f_1$ is positive,
\[
f_k= f_1 \int \frac{W(f_1,f_k)}{f_1^2},
\]
as in equation \eqref{ratio}, with some unknown constant of integration.
Then since for $2 \leq r \leq k$, all the necessary continuous extensions exist by the preceeding lemma, and $W(f_1, f_2, \ldots ,f_{r})$ being strictly positive by hypothesis we can apply Lemma \ref{keylemma} to obtain
\begin{multline*}
W( f_1, f_2,\ldots, f_{r-1},f_k)= W( f_1, f_2,\ldots, f_r) \int  \frac{ W( f_1, \ldots, f_{r-1}) W( f_1, \ldots, f_{r-1} f_{r},f_k)  } {W( f_1, f_2,\ldots, f_r)^2}.
\end{multline*}
Applying this successively  for $r=2,  \ldots, k$, and substituting into our expression for $f_k$ from the previous step of the iteration, gives the result, noting the  unknown constants of integration  don't affect the value of the determinants.
\end{proof}

Recalling from Section \ref{sec:dynamics} the definition of the map ${\mathcal R}$ the preceeding proposition implies that ${\mathcal R}: {\mathfrak F}_n \rightarrow {\mathfrak L}_n$ is injective and gives an explicit form for its left inverse. To show that ${\mathcal R}$ is surjective,  we need to consider whether the extension of  $\tilde{f}_1 \wedge \tilde{f}_2 \wedge \ldots \wedge \tilde{f}_k/ \Delta_k$ exists when $\tilde{f}_1, \ldots, \tilde{f}_k$ are  constructed as in the proposition but $\ell_1, \ldots, \ell_k$ are arbitrary strictly positive continuous functions. This follows easily from the following proposition.

\begin{proposition}
\label{representation2}
Suppose $\rho_1,\rho_2, \ldots, \rho_n$ are continuous  functions on ${\mathbf R}$,  and that 
$ g_1=\rho_1, g_2= \rho_1 \int \rho_2,  \ldots, g_n= \rho_1 \int \rho_{2} \int \cdots \int \rho_n.$ Then 
$(g_1\wedge g_2 \wedge \ldots \wedge g_n)/ \Delta_n$ has a continuous extension to ${\mathbf R}^{n}$ and 
\[
W(g_1,g_2, \ldots, g_n)= \rho_1^n W(h_1, h_2, \ldots, h_{n-1}),
\]
where 
$ h_1=\rho_2, h_2= \rho_2 \int \rho_3,  \ldots h_{n-1}= \rho_2 \int \rho_{3} \int \cdots \int \rho_n.$ 
If $\rho_1,\rho_2, \ldots, \rho_n$  are all strictly positive then so too is the extension of $(g_1\wedge g_2\wedge \ldots \wedge g_n)/ \Delta_n$.
\end{proposition}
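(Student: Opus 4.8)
The plan is to argue by induction on $n$, the base case $n=1$ being immediate since $g_1=\rho_1$ and $\Delta_1\equiv 1$, so that $W(g_1)=\rho_1$ with the empty Wronskian read as $1$. For the inductive step I would first observe that $h_1,h_2,\ldots,h_{n-1}$ is built from $\rho_2,\ldots,\rho_n$ by exactly the nested-integration recipe that produces $g_1,\ldots,g_n$ from $\rho_1,\ldots,\rho_n$. Hence the induction hypothesis applies to it and furnishes a continuous extension $M^h$ of $(h_1\wedge\ldots\wedge h_{n-1})/\Delta_{n-1}$ on $\mathbf{R}^{n-1}$, strictly positive when the $\rho_i$ are, so that $h_1\wedge\ldots\wedge h_{n-1}=\Delta_{n-1}M^h$ holds everywhere by continuity.

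The engine of the step is a reduction-of-order identity, made rigorous by multilinearity rather than differentiation (and without iterating Lemma \ref{keylemma}). Writing $g_k=\rho_1 V_{k-1}$ with $V_0\equiv 1$ and $V_m=\int h_m$, so $V_m'=h_m$, I factor $\rho_1(x_j)$ out of the $j$th column of the determinant; this is valid for all $\mathbf{x}$, including zeros of $\rho_1$, since it is pure multilinearity, and yields
\[
\frac{g_1\wedge\ldots\wedge g_n(\mathbf{x})}{\Delta_n(\mathbf{x})}=\Big(\prod_{j=1}^n\rho_1(x_j)\Big)\,\frac{V_0\wedge V_1\wedge\ldots\wedge V_{n-1}(\mathbf{x})}{\Delta_n(\mathbf{x})}.
\]
Because $V_0$ is constant, subtracting the first column from the others and expanding along the first row turns the second determinant into $\det[\int_{x_1}^{x_j}h_{i-1}]_{i,j=2}^n$, and a further application of column-multilinearity gives the clean representation
\[
V_0\wedge\ldots\wedge V_{n-1}(\mathbf{x})=\int_{x_1}^{x_2}\!\!\cdots\int_{x_1}^{x_n}(h_1\wedge\ldots\wedge h_{n-1})(\mathbf{t})\;\rd{\mathbf{t}},
\]
which is manifestly independent of the integration constants chosen in the $V_m$.

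Substituting $h_1\wedge\ldots\wedge h_{n-1}=\Delta_{n-1}M^h$ and dividing by $\Delta_n$, I would run a scaling computation: setting $\mathbf{x}=x\mathbf{1}_n+\varepsilon\mathbf{a}$ and letting $\varepsilon\to 0$, the powers of $\varepsilon$ coming from $\Delta_{n-1}(\mathbf{t})$, from the $n-1$ integration variables, and from $\Delta_n(\mathbf{x})$ balance exactly, the exponents being $\binom{n-1}{2}+(n-1)=\binom{n}{2}$. The ratio therefore converges, with limit equal to $M^h(x\mathbf{1}_{n-1})$ times the purely combinatorial constant $\int_{a_1}^{a_2}\cdots\int_{a_1}^{a_n}\Delta_{n-1}(\boldsymbol\tau)\,\rd{\boldsymbol\tau}\big/\Delta_n(\mathbf{a})$, which I would evaluate to $1/(n-1)!$ by an elementary Vandermonde integral. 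Combined with $\prod_j\rho_1(x_j)\to\rho_1(x)^n$ and the normalisation identity $c_{n-1}/(c_n(n-1)!)=1$, this simultaneously yields the continuous extension and the Wronskian identity $W(g_1,\ldots,g_n)=\rho_1^n W(h_1,\ldots,h_{n-1})$; strict positivity of the extension, when every $\rho_i>0$, then follows from that of $M^h$ together with the sign-definiteness of the Vandermonde-weighted integral.

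The main obstacle is precisely this confluent limit: establishing that the extension is continuous across every coincidence locus in $\mathbf{R}^n$, not merely computing its value on the full diagonal, while the integrand $M^h$ is only continuous, and correctly pinning down the combinatorial constant together with the sign and $c_n$-normalisation bookkeeping. The multilinearity representation above is what tames this, reducing the whole analysis to the induction hypothesis on $M^h$ and an elementary Vandermonde computation.
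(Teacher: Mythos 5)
Your first half is sound and coincides with the paper's: factoring $\rho_1$ out of each column and using column multilinearity to get
\[
g_1\wedge\cdots\wedge g_n(\mathbf{x})=\Bigl\{\textstyle\prod_k \rho_1(x_k)\Bigr\}\int_{x_1}^{x_2}\!\!\cdots\int_{x_1}^{x_n} h_1\wedge\cdots\wedge h_{n-1}(\mathbf{t})\;\mathrm{d}\mathbf{t}
\]
is exactly the paper's determinant identity, written over a product of intervals instead of the interlacing region $\{\mathbf{y}\preceq\mathbf{x}\}$ (the two agree because the integrand is antisymmetric). The gap is in the second half. The proposition asserts a continuous extension to \emph{all} of $\mathbf{R}^n$, i.e.\ at every coincidence pattern, and your scaling computation only treats limits $\mathbf{x}=x\mathbf{1}_n+\varepsilon\mathbf{a}$ at the full diagonal with the direction $\mathbf{a}$ \emph{fixed}. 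This is not enough even at $x\mathbf{1}_n$: writing $M^h(x\mathbf{1}_{n-1}+\varepsilon\boldsymbol{\tau})=M^h(x\mathbf{1}_{n-1})+\delta$ with $|\delta|\leq\omega(\varepsilon)$, your error term is bounded by $\omega(\varepsilon)\int_{\mathrm{box}(\mathbf{a})}|\Delta_{n-1}(\boldsymbol{\tau})|\,\mathrm{d}\boldsymbol{\tau}\,/\,|\Delta_n(\mathbf{a})|$, and this ratio blows up as $\mathbf{a}$ approaches a direction with coinciding coordinates, because over the box the integrand $\Delta_{n-1}$ is \emph{signed}: the signed integral vanishes like $\Delta_n(\mathbf{a})$ but the absolute integral does not. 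So sequences $\mathbf{x}^{(k)}\to x\mathbf{1}_n$ along degenerating directions are not controlled, partial coincidences such as $(x,x,z)$ with $x\neq z$ are not addressed at all, and consequently your induction does not close (you invoke continuity of $M^h$ on all of $\mathbf{R}^{n-1}$, but deliver continuity of $G$ only at diagonal points). For the same reason the claimed ``sign-definiteness of the Vandermonde-weighted integral'' is false over the box, so strict positivity is also unproven. Asserting that the multilinearity representation ``tames'' all of this is precisely the step that is missing.

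The paper closes this gap with a device you do not have: pass to the interlacing form, where the kernel $\Delta_{n-1}(\mathbf{y})\mathbf{1}(\mathbf{y}\preceq\mathbf{x})/\Delta_n(\mathbf{x})$ is \emph{nonnegative}, and recognise $(n-1)!$ times it as the conditional density of the eigenvalues of the principal $(n-1)\times(n-1)$ minor of an $n\times n$ GUE matrix given spectrum $\mathbf{x}$. This yields
\[
G(\mathbf{x})=\frac{1}{(n-1)!}\,\mathbf{E}\bigl[H\bigl(\Pi(\mathcal{U}\mathcal{D}(\mathbf{x})\mathcal{U}^*)\bigr)\bigr],
\]
with $\mathcal{U}$ Haar-distributed unitary and $\mathcal{D}(\mathbf{x})$ diagonal, a formula which makes sense for every $\mathbf{x}$ in the closed Weyl chamber, is continuous in $\mathbf{x}$ by dominated convergence (so all coincidence patterns are handled at once), extends to $\mathbf{R}^n$ by symmetry, and is manifestly positive when $H$ and $g_1$ are. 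The Wronskian identity then follows by evaluating at $x\mathbf{1}_n$, which is the only place your scaling bookkeeping (correctly) operates. If you wish to avoid random matrices, you must instead prove directly that the normalised measure $(n-1)!\,\Delta_{n-1}(\mathbf{y})\mathbf{1}(\mathbf{y}\preceq\mathbf{x})\,\mathrm{d}\mathbf{y}/\Delta_n(\mathbf{x})$ is weakly continuous in $\mathbf{x}$ up to the boundary of the chamber; that is a genuine argument about arbitrary clusterings of coordinates, not an elementary Vandermonde computation.
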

\begin{proof}
This is by induction on $n$. Let $H$ denote the continuous extension of  $(h_1\wedge h_2 \wedge \ldots \wedge h_{n-1})/ \Delta_{n-1}$ which we assume exists by the inductive hypothesis.
Denote $(g_1\wedge g_2 \wedge \ldots \wedge g_n)/ \Delta_n$ by $G$. Since this is symmetric, it  is enough to show that this extends from a function on  $W_n= \{\x\in {\mathbf R}^n: x_1<x_2 <\cdots <x_n\}$ to a continuous function on $ \{\x\in {\mathbf R}^n: x_1\leq x_2 \leq \cdots \leq x_n\}$.  Elementary operations on determinants verify that
\[
  g_1\wedge g_2 \wedge \ldots \wedge g_n (\x)=\left\{\prod_{k=1}^n g_1(x_k)  \right\} \int_{\y \preceq \x} h_1\wedge h_2 \wedge \ldots \wedge h_{n-1}(\y) \;\rd{\y},
\]
where $\y \preceq  \x$ means $\y$ is  interlaced with $\x$, that is,
\[
x_1\leq y_1 \leq x_2 \leq y_2 \leq \cdots \leq x_{n-1} \leq y_{n-1} \leq  x_n.
\]
Consequently, 
\[
  G(\x)= \int H(\y)  \frac{\Delta_{n-1} (\y)}{\Delta_n(\x) }{\mathbf 1}( \y \preceq \x) \;\rd{\y}.
\]
Now the kernel $(n-1)!\frac{\Delta_{n-1} (\y)}{\Delta_n(\x) }{\mathbf 1}( \y \preceq \x) $ is well known to have the following random matrix interpretation. If ${\mathcal  M}$ is an $n\times n$ random Hermitian matrix having the GUE distribution, and $\Pi {\mathcal  M}$ denotes its principal $(n-1) \times (n-1)$ minor then the conditional distribution of the eigenvalues of  $\Pi {\mathcal  M}$ given that the eigenvalues of  ${\mathcal  M}$ equal $\x \in W_n$ has density  given by this kernel. Consequently we can write 
\[
G(\x) = \frac{1}{(n-1)!}{\mathbf E} \bigl[  H( \Pi  {\mathcal  M} ) | \text{sp}({\mathcal M})=\x \bigr]=  \frac{1}{(n-1)!} {\mathbf E} \bigl[  H( \Pi ( {\mathcal  U}  {\mathcal D}(\x)  {\mathcal U}^*))\bigr]
\]
where  we interpret $H$ as a continuous function of $(n-1) \times (n-1)$ Hermitian matrices that is invariant under the action of the unitary group, ${\mathcal  U}$ denotes an $n\times n$ random unitary  matrix distributed according to Haar measure, and $ {\mathcal D}(\x)$ is a diagonal matrix with diagonal entries $(x_1,x_2, \ldots, x_n)$. But  this representation naturally defines $G$ on the boundary of the Weyl chamber $W_n$, and moreover since $\Pi$ and $H$ are continuous, the dominated convergence theorem implies that $G$ is continuous too. Moreover, if $H$ and $g_1$ are both strictly positive then so too must be $G$.

Finally, the relationship between Wronskians is simply the fact that 
\[
G(x{\mathbf 1}_n)=\frac{c_n}{c_{n-1}} g_1(x)^n H(x{\mathbf 1}_{n-1}).
\]
\end{proof}

To prove Theorem \ref{mainthm2} we must verify that the ``fundamental solution'' ${\boldsymbol\tau}(t,x, y)$ evolves according to  
\[
{\boldsymbol\tau}_{t}={\mathcal R}   {\mathcal M}_{s,t} {\mathcal R}^{-1} {\boldsymbol\tau}_{s},
\]
and satisfies the flow property stated in the theorem. 

Take a sequence of points $\x(k) \in {\mathbf R}^n_{\neq}$ with $\x(k) \rightarrow x{\mathbf 1}_n$ as $k \rightarrow \infty$. Then by virtue of Theorem \ref{thm:MnRegularity}, with probability one, 
\[
M_n( t, \x(k), \y) \rightarrow M_n(t, x{\mathbf 1}_n, \y) \text{ locally uniformly in $\y \in {\mathbf R}^n$}.
\]
Consequently,
\[
c_n^{-1}M_n( t, \x(k), y{\mathbf 1}_n) \rightarrow \tau_n(t,x,y) \text{ locally uniformly in $y \in {\mathbf R}$.}
\]
Now $M_n( t, \x(k), \cdot)  \in C(\mathbf{R}^n, \mathbf{R})$ for each $k$, and   ${\mathcal R}$ being a homeomorphism, we deduce that $(M_k( t, x{\mathbf 1}_k, \cdot))_{1\leq k\leq 
n}  \in {\mathfrak F}_n$ and 
\begin{equation}
\label{secondv}
{\mathcal R} \bigl(M_k(t, x{\mathbf 1}_k, \cdot), 1\leq k\leq n \bigr)=  {\boldsymbol\tau}(t, x, \cdot).
\end{equation}
Now it was proved in \cite{LW15} that,  for a fixed $s>0$ and  $k\geq 1$, $M_k(s, x{\mathbf 1}_k,\y)$ has uniformly bounded $p$th moments as $y\in {\mathbf R}^k$ varies, and consequently  we can  solve the evolution equation  for $M_k$ with shifted white noise and initial data  $\bigl(M_k(s, x{\mathbf 1}_k,\y); \y \in{\mathbf R}\bigr)$.  Thus, as at equation as a \eqref{eq:Mevolv}, it is meaningful to apply  ${\mathcal M}_{s,t}$ to    $(M_k( s, x{\mathbf 1}_k, \cdot))_{1\leq k\leq 
n}$,  giving  $(M_k( t, x{\mathbf 1}_k, \cdot))_{1\leq k\leq n}$. It now  follows from \eqref{secondv}, with the time $t$ replaced by the time $s$, that  ${\boldsymbol\tau}_t(x, \cdot)$  satisfies
\[
{\boldsymbol\tau}_{t}={\mathcal R}   {\mathcal M}_{s,t} {\mathcal R}^{-1} {\boldsymbol\tau}_{s}. 
\]

Corollary 6.2 of \cite{OW16},  states that, for given $\x, \y \in {\mathbf R}^n$ and $0<s<t$, with probability one,
\begin{equation}
\label{mflow}
M_n(t,\x,\y)= \frac{1}{n!} \int_{\mathbf{R}^n} M_n(s,\x, \z)M_n(s,t,\z,\y) \Delta_n(\z)^2 \;\rd{\z}
\end{equation}
where  $M_n(s,t, \cdot,\cdot)$ is defined  as $M_n(t-s,\cdot,\cdot)$ by the chaos expansion but with the shifted white noise $\dot{W}^{(t-s)}(\cdot,\cdot)= \dot{W}((t-s)+\cdot,\cdot)$. 
Notice that the argument that gave \eqref{secondv}  is easily adapted to give 
\begin{equation}
\label{firstv}
{\mathcal R} \bigl(M_k(t, \cdot, y{\mathbf 1}_k ), 1\leq k\leq n \bigr)=  {\boldsymbol\tau}_t( \cdot,y).
\end{equation}
 And now combining \eqref{secondv}, \eqref{firstv}, and the definition of the pairing $\langle \cdot,\cdot\rangle_{{\mathfrak L}_n}$ when we take $\x=x{\mathbf 1}_n$ and $\y=y{\mathbf 1}_n$,  we obtain the desired equation for ${\boldsymbol\tau}(t,x, y)$ from \eqref{mflow}.

Theorem \ref{thm:Markov} is a fairly immediate consequence of the construction in Theorem \ref{mainthm2}. 
Let $0\leq s < t$ and fix $x\in\R$.
Let $C\in\mathscr{B}({\mathfrak L }_n)$ be a Borel set. Denote $\mb{Z}_t = \big(Z_1(t,x,\cdot),\ldots,Z_n(t,x,\cdot)\big)$. Let $\bigl(\mathscr{F}_t\bigr)_{t \geq 0}$ denote the filtration generated by the white noise $\big( \dot{W}(s,A); 0\leq s\leq t, A\in  \mathscr{B}({\mathbf R}) \bigr)$. We will show that the conditional probability that $\mb{Z}_t\in C$ given $\mathscr{F}_s$ is measurable with respect to $\sigma\big(\mb{Z}_s\big)$. 

By \eqref{eq:MnBoundary}, the process $\mb{Z}_t$ is proportional to $\big(M_1(t,x\mb{1}_1,\cdot\mb{1}_1),\ldots,M_n(t,x\mb{1}_n,\cdot\mb{1}_n)\big)$ and since $M_k(t,x\mb{1}_k,\cdot) $ can almost surely be identified as the solution of the evolution equation \eqref{eq:MnSPDE} with initial condition $M_k(s,x\mb{1}_k,\cdot) $ and driven by the shifted white noise which is independent of ${\mathscr F}_s$, we have,
\[
 \mathbb{P}[\mb{Z}_t\in C | \mathscr{F}_s]  \text{ is measurable with respect to $\big(M_1(s,x\mb{1}_1,\cdot),\ldots,M_n(s,x\mb{1}_n,\cdot)\big)$.}
\]
However, for each $1 \leq k \leq n$, $M_k(s,x\mb{1}_k,\cdot)$ is a function of $\mb{Z}_s$ and the result follows.

\section{Appendix}\label{sec:appendix}

We show that 
\[
  M_n(t,\x,\y) = \frac{\det[u(t,x_i,y_j)]_{i,j=1}^n}{\Delta_n(\x)\Delta_n(\y)}
\]
solves
\[
  M_n(t,\x,\y) = \frac{p_n^*(t,\x,\y)}{\Delta_n(\x)\Delta_n(\y)} + A_n \int_0^t \int_{\R^n} Q_{t-s}(\y,\z) M_n(s,\x,\z) \;\rd{\z_*} \W{s}{z_1},
\]
where $Q_t(\y,\z) = \frac{\Delta_n(\z)}{\Delta_n(\y)}p_n^*(t,\y,\z)$, $\rd{\z_*} = \rd{z_2}\ldots\rd{z_n}$ and $A_n = \frac{1}{(n-1)!}$.
It suffices to prove that $K_n(t,\x,\y) := \det[u(t,x_i,y_j)]_{i,j=1}^n$ satisfies
\begin{equation}
  K_n(t,\x,\y) = p_n^*(t,\x,\y) + A_n \int_0^t \int_{\R^n} p_n^*(t-s,\y,\z) K_n(s,\x,\z) \;\rd{\z_*} \W{s}{z_1},
  \label{eq:KnMild}
\end{equation}
then the result follows upon dividing by $\Delta_n(\x)\Delta_n(\y)$.
First note that $u$ is the solution to the following mild equation
\begin{align*}
  u(t,x,y) 
    &= p_t(x-y) + \int_0^t \int_\R p_{t-s}(y-z) u(s,x,z) \;\W{s}{z} \notag \\
    &= p_t(x-y) + I(t,x,y).
  \label{eq:SHEmild}
\end{align*}
Using this and expanding the determinant we have
\begin{align*}
  K_n(t,\x,\y) 
    &= \sum_{\sigma} (-1)^\sigma \prod_{i=1}^n \big( p_t(x_{\sigma i}-y_i) + I(t,x_{\sigma i},y_i) \big) \\
    &= \sum_{m=0}^n \sum_\sigma \sum_{(\mb{i},\mb{j})\in D_m} (-1)^\sigma \prod_{r=1}^m I(t,x_{\sigma i_r},y_{i_r}) \prod_{r=m+1}^n p_t(x_{\sigma j_r}-y_{j_r}),
\end{align*}
where $D_m := \{(\mb{i}, \mb{j}) : 1\leq i_1<\cdots<i_m\leq n, 1\leq j_{m+1}<\cdots<j_n\leq n, i_k \neq j_l \;\forall k,l \}$ and empty products are defined to be equal to 1.

For martingales $X^1,\ldots, X^n$ we have the following product formula
\begin{align*}
  \prod_{i=1}^n X_t^i = \prod_{i=1}^n X_0^i + \sum_{i=1}^n \int_0^t \prod_{j\neq i} X_s^j \;\rd{X_s^i} + \sum_{i\neq j} \int_0^t \prod_{k\neq i,j} X_s^k \;\rd{\langle X^i,X^j\rangle_s},
\end{align*}
which follows from the familiar formula for the $n=2$ case and induction.
Using this we have
\begin{align*}
  K_n(t,\x,\y) 
  &= \sum_{m=0}^n \sum_\sigma \sum_{(\mb{i},\mb{j})\in D_m} (-1)^\sigma \prod_{r=m+1}^n p_t(x_{\sigma j_r}-y_{j_r}) \\
  &\qquad\times \bigg( \sum_{k=1}^m \int_0^t \prod_{r\neq k} I_s^{\sigma i_r} \;\rd{I_s^{\sigma i_k}} - \sum_{k\neq l} \int_0^t \prod_{r\neq k,l} I_s^{\sigma i_r} \;\rd{\langle I^{\sigma i_k},I^{\sigma i_l}\rangle_s} \bigg),
\end{align*}
where $I_t^{\sigma i} := I(s,x_{\sigma i},y_i)$ and by \cite[Theorem 5.26]{Kh09}
\begin{equation*}
  \langle I^{\sigma i}, I^{\sigma j} \rangle_t = \int_0^t \int_\R p_{t-s}(y_i-z)p_{t-s}(y_j-z)u(s,x_{\sigma i},z)u(s,x_{\sigma j},z) \;\rd{z}\rd{s}.
  \label{eq:quadraticVariation}
\end{equation*}
Consider $\sigma^\p = \sigma\circ(i,j)$ then $\langle I^{\sigma^\p i}, I^{\sigma^\p j} \rangle_t = \langle I^{\sigma i}, I^{\sigma j} \rangle_t$.
Moreover since $(-1)^{\sigma^\p} = -(-1)^\sigma$ we have, by considering such pairs of permutations, that for each $m = 2,\ldots,n$
\[
  \sum_{\sigma} \sum_{(\mb{i},\mb{j})\in D_m} \sum_{k\neq l} (-1)^\sigma \int_0^t \prod_{r\neq k,l} I_s^{\sigma i_r} \;\rd{\langle I^{\sigma i_k},I^{\sigma i_l}\rangle_s} = 0.
\]
Therefore,
\begin{equation}
  K_n(t,\x,\y) = \sum_{m=0}^n \sum_\sigma \sum_{(\mb{i},\mb{j})\in D_m} \sum_{k=1}^m (-1)^\sigma \prod_{r=m+1}^n p_t(x_{\sigma j_r}-y_{j_r}) \int_0^t \prod_{r\neq k} I_s^{\sigma i_r} \;\rd{I_s^{\sigma i_k}}.
  \label{eq:LHS}
\end{equation}

On the other hand,
\begin{align}
  \int_0^t \int_{\R^n} & p_n^*(t-s,\y,\z) K_n(s,\x,\z) \;\rd{\z_*} \W{s}{z_1} \notag \\ 
  &= \sum_\sigma\sum_\pi (-1)^\sigma (-1)^\pi \int_0^t \int_{\R^n} \prod_{i=1}^n p_{t-s}(y_{\pi i}-z_i) u(s,x_{\sigma i}, z_i) \;\rd{\z_*}\W{s}{z_1}.
  \label{eq:RHS1}
\end{align}
Observe that
\begin{align*}
  u(t,x,y) 
  &= p_t(x-y) + I(t,x,y) \\
  &= \int_\R p_{t-s}(y-z) u(s,x,z) \;\rd{z} + \int_s^t \int_\R p_{t-r}(y-z) u(r,x,z) \;\W{r}{z},
\end{align*}
and so
\[
  \int_\R p_{t-s}(y-z) u(s,x,z) \;\rd{z} = p_t(x-y) + I(s,x,y).
\]
Using this the right hand side of \eqref{eq:RHS1} is equal to
\begin{align*}
  \sum_\sigma\sum_\pi & (-1)^\sigma (-1)^\pi \int_0^t \int_\R p_{t-s}(y_{\pi 1}-z) u(s,x_{\sigma 1},z) \\
  &\qquad\qquad\times \prod_{i=2}^n \big( p_t(x_{\sigma i}-y_{\pi i}) + I(s,x_{\sigma i},y_{\pi i}) \big) \;\W{s}{z} \\
  &= \sum_{m=1}^n \sum_\sigma\sum_\pi \sum_{(\mb{i},\mb{j})\in D_m^\p} (-1)^\sigma (-1)^\pi \prod_{r=m+1}^n p_t(x_{\sigma j_r}-y_{\pi j_r}) \\
  &\qquad\qquad\times \int_0^t \int_\R p_{t-s}(y_{\pi 1}-z) u(s,x_{\sigma 1},z) \prod_{r=2}^m I(s,x_{\sigma i_r},y_{\pi i_r}) \;\W{s}{z},
\end{align*}
where $D_m^\p = \{ (\mb{i},\mb{j}) : 2\leq i_2<\cdots<i_m\leq n, 2\leq j_{m+1}<\cdots<j_n\leq n, i_k\neq j_l \;\forall k,l\}$.
Note that there are $\binom{n-1}{m-1}$ terms in the sum over $D_m^\p$.

Let $(\mb{a},\mb{b})\in D_m$ where $D_m$ was defined above then for $1\leq k\leq m$, split the sum over $\pi\in S_n$ into groups $G_k(\mb{a},\mb{b})$ of permutations such that $\pi 1 = a_k$, $\pi i_r \in \{a_i : 1\leq i \leq m, i\neq k\}$ and for $r = 2,\ldots,m$ and $\pi i_r \in \{b_{m+1},\ldots,b_n\}$ for $r = m+1,\ldots,n$.
Then the right hand side of the previous display is equal to
\begin{align*}
  &\sum_{m=1}^n \sum_{(\mb{a},\mb{b})\in D_m} \sum_{k=1}^m \sum_{\pi\in G_k(\mb{a},\mb{b})} \sum_{(\mb{i},\mb{j})\in D_m^\p} \sum_\sigma (-1)^\sigma (-1)^\pi \prod_{r=m+1}^n p_t(x_{\sigma j_r}-y_{\pi j_r}) \\
  &\qquad\qquad\times \int_0^t \int_\R p_{t-s}(y_{\pi 1}-z) u(s,x_{\sigma 1},z) \prod_{r=2}^m I(s,x_{\sigma i_r},y_{\pi i_r}) \;\W{s}{z} \\
  &= \sum_{m=1}^n \sum_{(\mb{a},\mb{b})\in D_m} \sum_{k=1}^m \sum_\sigma A_n^{-1} (-1)^\sigma \prod_{r=m+1}^n p_t(x_{\sigma b_r}-y_{b_r}) \\
  &\qquad\qquad\times \int_0^t \int_\R p_{t-s}(y_{a_k}-z) u(s,x_{\sigma a_k},z) \prod_{r\neq k} I(s,x_{\sigma a_r},y_{a_r}) \;\W{s}{z},
\end{align*}
where the last equality is due to the fact that for each $\mb{a}$, $\mb{b}$ and $k$ each term in the sum $\sum_{\pi\in G_k(\mb{a},\mb{b})} \sum_{(\mb{i},\mb{j})\in D_m}$ are equal and there are in total $\binom{n-1}{m-1}(m-1)!(n-m)! = A_n^{-1}$ terms in the sum.
Finally, observing that the $m=0$ term in \eqref{eq:LHS} is equal to $p_n^*(t,\x,\y)$ shows that $\det[u(t,x_i,y_j)]_{i,j=1}^n$ satisfies equation \eqref{eq:KnMild}.

In the calculations above we saw stochastic integrals of the form
\begin{equation}
  \int_0^t \int_\R p_{t-s}(y_1-z) u(s,x_1,z) \prod_{i=2}^m I(s,x_i,y_i) \;\W{s}{z}.
  \label{eq:stochasticIntegral}
\end{equation}
This integral is well defined since the integrand is adapted and continuous and by H\"older's inequality
\begin{align}
  \int_0^t \int_\R & p_{t-s}(y_1-z)^2 \bigg\V u(s,x_1,z) \prod_{i=2}^m I(s,x_i,y_i) \bigg\V_2^2 \;\rd{z}\rd{s} \notag \\
  &\leq \int_0^t \int_\R p_{t-s}(y_1-z)^2 \V u(s,x_1,z) \V_{2m}^2 \prod_{i=2}^m \V I(s,x_i,y_i) \V_{2m}^2 \;\rd{z}\rd{s},
  \label{eq:L2Estimate}
\end{align}
where $\V\cdot\V_p := (\E[|\cdot|^p])^{1/p}$.
By the Burkholder--Davis--Gundy inequality, there is a constant $c = c(m)$ such that
\begin{align*}
  \V I(s,x,y) \V_{2m}^2 \leq c \int_0^s \int_\R p_{t-r}(y-z)^2 \V u(r,x,z)\V_{2m}^2 \;\rd{z}\rd{r}.
\end{align*}
Since $\V u(r,x,z) \V_p \leq c p_r(x-z)$ for a constant $c$ depending on $p$, see for example \cite[equation 3.19]{BC95}, we have for constants depending only on $m$
\begin{align*}
  \V I(s,x,y) \V_{2m}^2 
  &\leq c^\p \int_0^s \int_\R p_{t-r}(y-z)^2 p_r(x-y)^2 \;\rd{z}\rd{r} \\
  &\leq c^{\p\p} p_{t/2}(x-y) \int_0^s \frac{1}{\sqrt{t-r}} \frac{1}{\sqrt{r}} \;\rd{r} \\
  &\leq c^{\p\p} \pi p_{t/2}(x-y),
\end{align*}
where in the last line we used the fact that
\[
  \int_0^s \frac{1}{\sqrt{t-r}} \frac{1}{\sqrt{r}} \;\rd{r} \leq \int_0^t \frac{1}{\sqrt{t-r}} \frac{1}{\sqrt{r}} \;\rd{r} = \pi.
\]
Therefore, the right hand side of \eqref{eq:L2Estimate} is less than
\begin{align*}
  c^{\p\p\p} \prod_{i=2}^m p_{t/2}(x_i-y_i) \int_0^t \int_\R p_{t-s}(y_1-z)^2 p_s(x_1-z)^2 \;\rd{z}{s} \leq c^{\p\p\p\p} \prod_{i=1}^m p_{t/2}(x_i-y_i) < \infty.
\end{align*}
Consequently, by \cite[Proposition 3.1]{CD13}, the integral \eqref{eq:stochasticIntegral} is well defined.

\bibliographystyle{halpha}

\bibliography{./Biblio.bib}          

\begin{thebibliography}{DLLD16}

\bibitem[BBO09]{BBO09}
Philippe Biane, Philippe Bougerol, and Neil O'Connell.
\newblock Continuous crystal and {D}uistermaat-{H}eckman measure for {C}oxeter
  groups.
\newblock {\em Adv. Math.}, 221(5):1522--1583, 2009.

\bibitem[BC95]{BC95}
Lorenzo Bertini and Nicoletta Cancrini.
\newblock The stochastic heat equation: {F}eynman--{K}ac formula and
  intermittence.
\newblock {\em J. Statist. Phys.}, 78(5-6):1377--1401, 1995.

\bibitem[BDJ99a]{BDJ99}
Jinho Baik, Percy Deift, and Kurt Johansson.
\newblock {On the distribution of the length of the longest increasing
  subsequence of random permutations}.
\newblock {\em J. Amer. Math. Soc.}, 12(4):1119--1178, 1999.

\bibitem[BDJ99b]{BDJ99b}
Jinho Baik, Percy Deift, and Kurt Johansson.
\newblock {On the distribution of the length of the second row of a Young
  diagram under Plancherel measure}.
\newblock {\em Geom. Funct. Anal}, 10:25, 1999.

\bibitem[BOO00]{BOO00}
Alexei Borodin, Andrei Okounkov, and Grigori Olshanski.
\newblock Asymptotics of plancherel measures for symmetric groups.
\newblock {\em J. Amer. Math. Soc.}, 13(3):481--515, 2000.

\bibitem[CD15]{CD13}
Le~Chen and Robert~C. Dalang.
\newblock Moments and growth indices for the nonlinear stochastic heat equation
  with rough initial conditions.
\newblock {\em Ann. Probab.}, 43(6):3006--3051, 2015.

\bibitem[CH14]{CH14}
Ivan Corwin and Alan Hammond.
\newblock {Brownian Gibbs property for Airy line ensembles}.
\newblock {\em Inventiones mathematicae}, 195(2):441--508, 2014.

\bibitem[CH15]{CH13}
Ivan Corwin and Alan Hammond.
\newblock {KPZ} line ensemble.
\newblock {\em Probability Theory and Related Fields}, pages 1--119, 2015.

\bibitem[CN16]{CN16}
Ivan Corwin and Mihai Nica.
\newblock {Intermediate disorder directed polymers and the multi-layer
  extension of the stochastic heat equation}, 2016, arXiv:1603.08168.

\bibitem[COSZ14]{COSZ14}
Ivan Corwin, Neil O'Connell, Timo Sepp\"{a}l\"{a}inen, and Nikos Zygouras.
\newblock {Tropical combinatorics and {W}hittaker functions}.
\newblock {\em Duke Math. J.}, 163(3):513--563, 2014.

\bibitem[DLLD15]{DL15}
Andrea De~Luca and Pierre Le~Doussal.
\newblock Crossing probability for directed polymers in random media.
\newblock {\em Phys. Rev. E}, 92:040102, Oct 2015.

\bibitem[DLLD16]{DL16}
Andrea De~Luca and Pierre Le~Doussal.
\newblock Crossing probability for directed polymers in random media. ii. exact
  tail of the distribution.
\newblock {\em Phys. Rev. E}, 93:032118, Mar 2016.

\bibitem[Dys62]{Dy62}
Freeman~J. Dyson.
\newblock A {B}rownian-motion model for the eigenvalues of a random matrix.
\newblock {\em J. Math. Phys.}, 3:1191--1198, 1962.

\bibitem[Hai13]{Ha13}
Martin Hairer.
\newblock {Solving the KPZ equation}.
\newblock {\em Ann. Math.}, 178(2):559--664, 2013.

\bibitem[Hir04]{Hir04}
Ryōgo Hirota.
\newblock {\em The direct method in soliton theory}.
\newblock Cambridge University Press, Cambridge, U.K., 2004.

\bibitem[Joh00]{Joh99}
Kurt Johansson.
\newblock {Shape Fluctuations and Random Matrices}.
\newblock {\em Comm. Math. Phys.}, 209(2):51, 2000.

\bibitem[Joh01]{Joh01}
Kurt Johansson.
\newblock Discrete orthogonal polynomial ensembles and the plancherel measure.
\newblock {\em Ann. Math.}, 153(1):259--296, 2001.

\bibitem[Joh03]{Joh03}
Kurt Johansson.
\newblock {Discrete Polynuclear Growth and Determinantal Processes}.
\newblock {\em Commun. Math. Phys.}, 242(1-2):277--329, 2003.

\bibitem[Kar68]{Ka68}
S.~Karlin.
\newblock {\em Total Positivity}.
\newblock Number v. 1 in Total Positivity. Stanford University Press, 1968.

\bibitem[Kho09]{Kh09}
Davar Khoshnevisan.
\newblock A primer on stochastic partial differential equations.
\newblock In {\em A Minicourse on Stochastic Partial Differential Equations},
  volume 1962 of {\em Lecture Notes in Mathematics}, pages 1--38. Springer
  Berlin Heidelberg, 2009.

\bibitem[KM59]{KM59}
Samuel Karlin and James McGregor.
\newblock Coincidence probabilities.
\newblock {\em Pacific J. Math.}, 9:1141--1164, 1959.

\bibitem[LW15]{LW15}
Chin~Hang Lun and Jon Warren.
\newblock Continuity and strict positivity of the multi-layer extension of the
  stochastic heat equation.
\newblock 2015, {arXiv:1506.09030}.

\bibitem[OW16]{OW16}
Neil O'Connell and Jon Warren.
\newblock A {M}ulti-{L}ayer {E}xtension of the {S}tochastic {H}eat {E}quation.
\newblock {\em Comm. Math. Phys.}, 341(1):1--33, 2016.

\bibitem[PS02]{PS02}
Michael Pr\"ahofer and Herbert Spohn.
\newblock Scale invariance of the {PNG} droplet and the {A}iry process.
\newblock {\em J. Stat. Phys.}, 108:1071--1106, 2002.

\end{thebibliography}

\end{document}